\documentclass[11pt]{article}

      \oddsidemargin20pt
      \evensidemargin20pt
      \textwidth430pt
      \topmargin-20pt
      \textheight650pt

\usepackage{amsmath,amssymb,amsthm}
\usepackage[utf8]{inputenc}

\newcommand{\vertiii}[1]{{\vert\kern-0.25ex\vert\kern-0.25ex\vert #1 
    \vert\kern-0.25ex\vert\kern-0.25ex\vert}}
\newcommand{\Vertiii}[1]{{\Big\vert\kern-0.25ex\Big\vert\kern-0.25ex\Big\vert #1 
    \Big\vert\kern-0.25ex\Big\vert\kern-0.25ex\Big\vert}}

\newcommand\IN{\mathbb{N}}
\newcommand\IZ{\mathbb{Z}}

\language0

\theoremstyle{plain}
\newtheorem{theorem}{Theorem}
\newtheorem{proposition}[theorem]{Proposition}
\newtheorem{corollary}[theorem]{Corollary}

\theoremstyle{definition}
\newtheorem{definition}[theorem]{Definition}
\newtheorem{remark}[theorem]{Remark}
\newtheorem{example}[theorem]{Example}
\newtheorem{question}[theorem]{Question}

\begin{document}

\setcounter{footnote}{-1}

\title{Frequently hypercyclic bilateral shifts}
\author{Karl-G. Grosse-Erdmann\thanks{\noindent
2010 \textit{Mathematics Subject Classification}: 47A16, 47B37
\newline \hspace*{4.3mm}
\textit{Key words and phrases}: frequently hypercyclic operator, upper frequently hypercyclic operator, bilateral weighted shift
\newline\hspace*{4.3mm}
This work was supported by the Fonds de la Recherche Scientifique - FNRS under grant no. PDR T. 0164.16 
}}
\date{}
\maketitle
~\\[-16mm]
\begin{quote}
\textsc{Abstract}. It is not known if the inverse of a frequently hypercyclic bilateral weighted shift on $c_0(\IZ)$ is again frequently hypercyclic. We show that the corresponding problem for upper frequent hypercyclicity has a positive answer. We characterise, more generally, when bilateral weighted shifts on Banach sequence spaces are (upper) frequently hypercyclic. 
\end{quote}

\section{Introduction}

This paper is motivated by one of the major open problems in Linear Dynamics, see \cite[Question 4.3]{BaGr06} and also \cite[Problem 44]{GMZ16}.

\begin{question}[Bayart, Grivaux 2006] \label{q1}
Let $T$ be an invertible frequently hypercyclic operator. Is $T^{-1}$ frequently hypercyclic?
\end{question}

It is a consequence of the Baire category theorem (via the Birkhoff transitivity theorem) that the corresponding result is true for classical hypercyclicity, see \cite{BaMa09}, \cite{GrPe11}. Since an operator and its inverse share all their periodic points, the result is also true for chaotic operators. However, the Baire category theorem loses its power for frequent hypercyclicity, see \cite[Proposition 4.1]{BaGr06}, hence the interest in Question \ref{q1}. 

There seems to be a consensus that the answer to the question should be negative. And a good (as well as tractable) candidate might be a suitable bilateral weighted shift operator. Bayart and Ruzsa \cite{BaRu15} have recently obtained the somewhat surprising result that a  bilateral weighted shift on $\ell^p(\IZ)$, $1\leq p<\infty$, is frequently hypercyclic if and only if it is chaotic, which then excludes these operators as counter-examples. This leaves the space $c_0(\IZ)$ as a natural underlying space, even more so as shifts on spaces of null sequences have already provided various counter-examples in Linear Dynamics, see \cite{BaGr07}, \cite{BaRu15}, \cite{BMPP16}, \cite{BoGr17}. Invertible frequently hypercyclic bilateral weighted shifts on $c_0(\IZ)$ have been characterised by Bayart and Ruzsa \cite{BaRu15}, but the complexity of the conditions has so far not allowed to decide Question \ref{q1} for such operators; see also \cite[p.~707]{BaRu15}.

Recently, the related notion of upper frequent hypercyclicity has attracted some attention, see \cite{BaRu15}, \cite{BMPP16}, \cite{BoGr17}. Again, the following problem, which appears implicitly in the paper of Bayart and Ruzsa \cite{BaRu15}, is open. 

\begin{question}\label{q2}
Let $T$ be an invertible upper frequently hypercyclic operator. Is $T^{-1}$ upper frequently hypercyclic?
\end{question}

Interestingly, Bayart and Ruzsa show that if $T$ is invertible and frequently hypercyclic then $T^{-1}$ is upper frequently hypercyclic. They also show that the answer to Question \ref{q2} is positive for weighted backward shifts $B_w$ on $\ell^p(\IZ)$, $1\leq p<\infty$, which suggests again shifts on $c_0(\IZ)$ as the next candidates for a counter-example. The main result of this paper is to dispel this hope.

\begin{theorem}\label{t-main}
Let $B_w$ be an invertible weighted backward shift on $c_0(\IZ)$. If $B_w$ is upper frequently hypercyclic then so is $B_w^{-1}$.
\end{theorem}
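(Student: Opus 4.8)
The plan is to strip off the inversion with a flip and then reduce everything to the upper-density combinatorics of the cumulative weight products. Write $B_we_n=w_ne_{n-1}$; invertibility means $0<\inf_n|w_n|\le\sup_n|w_n|<\infty$, so that $B_w^{-1}e_n=w_{n+1}^{-1}e_{n+1}$. Let $J$ be the isometric isomorphism of $c_0(\IZ)$ given by $Je_n=e_{-n}$. A direct computation gives $JB_w^{-1}Je_n=w_{1-n}^{-1}e_{n-1}$, so $JB_w^{-1}J=B_{\tilde w}$ is again a weighted backward shift, with $\tilde w_n=w_{1-n}^{-1}$. Since $J$ is an isometric isomorphism, $B_w^{-1}$ is upper frequently hypercyclic if and only if $B_{\tilde w}$ is, and the problem becomes one about a single weight sequence. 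Introducing the cumulative products $w(n)=w_1\cdots w_n$ for $n\ge1$, $w(0)=1$, and $w(n)=(w_0\cdots w_{n+1})^{-1}$ for $n\le-1$, so that $B_w^ke_n=\tfrac{w(n)}{w(n-k)}\,e_{n-k}$, one checks the clean identity $\tilde w(n)=w(-n)$ for all $n\in\IZ$. Thus Theorem~\ref{t-main} is equivalent to: the upper-frequent-hypercyclicity condition for a bilateral backward shift on $c_0(\IZ)$, read off from the two-sided sequence $(w(n))_{n\in\IZ}$, is invariant under reversing this sequence.

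The next step is to make that condition explicit, which I would do by analysing the block construction underlying upper frequent hypercyclicity on $c_0(\IZ)$. To approximate a finitely supported target $y$ along a set of times $N$ of positive upper density, one must place, for each such $N$, a block of mass at the coordinates $N+\operatorname{supp}(y)$, with entries of size $\asymp |y_i|\,|w(i)|/|w(N+i)|\asymp C/w(N)$. Two requirements emerge. First (self-largeness), these entries must tend to $0$, forcing $w(N)$ to be large on the chosen times; consequently, for every threshold $A$ the set $\{N\ge1:w(N)\ge A\}$ must have positive upper density. Second (separation), for two chosen times $N<N'$ the block for $N'$ spills, under $B_w^{N}$, into coordinate $\approx N'-N>0$ with size $\asymp 1/w(N'-N)$, while the block for $N$ spills, under $B_w^{N'}$, into coordinate $\approx N-N'<0$ with size $\asymp 1/w(N-N')$; controlling both forces $w(g)$ and $w(-g)$ to be large for every gap $g=N'-N$. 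The crucial observation is that this two-sided gap requirement is manifestly symmetric under $n\mapsto -n$, whereas the self-largeness requirement is not; the whole difficulty of the theorem is therefore concentrated in transferring the self-largeness condition across the reversal.

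Granting such a description, I would then run the transfer argument. A witnessing family of time-sets $\{T_j\}$ for $B_w$ satisfies: $w(N)\ge A_j$ on $T_j$, and $w(\pm g)\ge B_j$ for every gap $g$ of $T=\bigcup_jT_j$. For $B_{\tilde w}$, where $\tilde w(n)=w(-n)$, the gap condition is literally unchanged, and the self-largeness condition now asks for $w(-N)\ge A_j$ on the new times. The natural source of points where $w(-\cdot)$ is large is the difference set $T-T$: by the two-sided gap condition, both $w(\cdot)$ and $w(-\cdot)$ are large on $T-T$. So the plan is to extract from $T$ a new family, supported on positive differences, of positive upper density, on which $w(-\cdot)$ is large and whose own two-sided difference set is still controlled.

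The main obstacle is exactly this last point: the differences of a set drawn from $T-T$ are governed by the fourfold difference set of $T$, which the hypotheses on $T$ do not directly bound. Overcoming it is where upper frequent hypercyclicity genuinely differs from frequent hypercyclicity. Because positive upper density (unlike positive lower density) is compatible with arbitrarily long gaps, one has the freedom to pass to a very lacunary subset of times whose iterated differences avoid the region where $w$ is small, while retaining positive upper density; here the boundedness of $\log|w_n|$ coming from invertibility is used both to move between thresholds and to guarantee that the thinning does not destroy the density. This extra room is precisely what is unavailable in the frequently hypercyclic setting, which is the structural reason the analogue of Question~\ref{q1} resists the same treatment. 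I expect the careful packing of blocks, ensuring simultaneously positive upper density, the self-largeness of $w(-\cdot)$, and the controlled two-sided difference set, to be the technical heart of the argument.
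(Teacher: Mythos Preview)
Your reduction via the flip $J$ and the identity $\tilde w(n)=w(-n)$ is correct, and your identification of the two ingredients in the Bayart--Ruzsa characterisation---a symmetric two-sided gap condition on $w(\pm g)$ and a non-symmetric ``self-largeness'' condition $w(N)\to\infty$ along the chosen times---is exactly right. The difficulty is precisely where you locate it.

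However, the proposal does not actually close the gap it diagnoses. You propose to manufacture, from a witnessing family $T$, a new family supported on $T-T$ so that $w(-\cdot)$ is large there, and then to control the new two-sided difference set by ``lacunary thinning''. But the new gaps lie in the fourfold difference set of $T$, which is not governed by the hypothesis, and you give no mechanism for avoiding the (a priori unbounded) set where $|w|$ is small while retaining positive upper density; invertibility only bounds $\log|w_n|$ pointwise, not the cumulative products. The sentence ``I expect the careful packing of blocks \ldots\ to be the technical heart of the argument'' is an honest acknowledgement that the heart of the proof is missing. As it stands, this is a plan, not a proof.

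The paper takes a structurally different route that sidesteps the combinatorial transfer entirely: instead of verifying the self-largeness condition for $\tilde w$, it shows that for upper Furstenberg families the self-largeness condition can be \emph{dropped} from the characterisation. The mechanism is to pass to the ambient space $\widehat{c_0(\IZ)}=\ell^\infty(\IZ)$, where the constructive argument produces an $\mathcal{A}$-hypercyclic vector \emph{for} $c_0(\IZ)$ using only the (symmetric) gap condition, since no convergence is required in $\ell^\infty$. A Baire-category transfer theorem (Theorem~\ref{t-transfer}) then upgrades ``$\mathcal{A}$-hypercyclic for $c_0(\IZ)$ in $\ell^\infty(\IZ)$'' to ``$\mathcal{A}$-hypercyclic on $c_0(\IZ)$'' whenever $\mathcal{A}$ is upper. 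The resulting characterisation (Corollary~\ref{c-bwahc0}) is manifestly invariant under $w\mapsto w'$, and Theorem~\ref{t-main} follows in one line. In short: you try to push the asymmetric condition across the reversal; the paper shows the asymmetric condition was never needed.
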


The proof requires both constructive arguments and an application of the Baire category theorem. Indeed, the fact that Baire is back in force for the notion of upper frequent hypercyclicity was first noticed by Bayart and Ruzsa \cite{BaRu15} and then further developped in \cite{BoGr17}. This, then, might suggest that the answer to Question \ref{q2} is positive for all operators. However, the Birkhoff type theorem for upper frequent hypercyclicity obtained in \cite{BoGr17} is fundamentally non-symmetric, which leaves open the possibility that the answer is nonetheless negative in general. 

The paper is organized as follows. In Section \ref{s-nn} we fix terminology and notation; in particular we associate a space $\widehat{X}$ to a Banach sequence space $X$ in which the canonical unit sequences form a basis. In Section \ref{s-bilwsarb} we characterise (under suitable assumptions) when a weighted backward shift on $\widehat{X}$ is $\mathcal{A}$-hypercyclic for $X$, and this for arbitrary Furstenberg families $\mathcal{A}$. The proof is constructive. In Section \ref{s-fortoin} we show how one may pass from $\mathcal{A}$-hypercyclicity \textit{for} $X$ to $\mathcal{A}$-hypercyclicity \textit{in} $X$ in the case when $\mathcal{A}$ is an upper Furstenberg family; this result might also be of independent interest. Its proof uses the Baire category theorem. As an application we obtain a characterisation of $\mathcal{A}$-hypercyclicity for weighted backward shifts on $X$ for such families, see Section \ref{s-bilwsupp}. This then allows us to prove Theorem \ref{t-main}. In Section \ref{s-bilwsXarb} we obtain a (different) characterisation of $\mathcal{A}$-hypercyclicity for weighted backward shifts on $X$ for arbitrary Furstenberg families. We end with a remark on unilateral weighted backward shifts, see Section \ref{s-uni}.

\section{Terminology and notation}\label{s-nn}

An operator $T$ on a (real or complex) Banach space $X$ is \textit{hypercyclic} if it admits a dense orbit $\{T^nx:n\geq 0\}$, in which case $x$ is called a \textit{hypercyclic vector}; for the operator to be \textit{chaotic} we need, in addition, a dense set of periodic points. The operator $T$ is called \textit{frequently hypercyclic} if there is a vector $x\in X$ such that, for any non-empty open subset $U$ of $X$, the set $\{n\geq 0 : T^nx\in U\}$ has positive lower density, that is, 
\[
\liminf_{N\to\infty} \frac{1}{N+1} \text{card}\{n\leq N : T^nx\in U\} >0;
\]
the vector $x$ is then called a \textit{frequently hypercyclic vector}. If one replaces lower by upper density one arrives at the notion of \textit{upper frequent hypercyclicity}. For an introduction to Linear Dynamics we refer to the monographs \cite{BaMa09} and \cite{GrPe11}.

More generally, let $\mathcal{A}$ be a \textit{Furstenberg family}, that is, a non-empty family of subsets of $\mathbb{N}_0$ such that if $A\in\mathcal{A}$ and $B\supset A$ then $B\in \mathcal{A}$. Then the operator $T$ is called \textit{$\mathcal{A}$-hypercyclic} if there is a vector $x\in X$ (which is then called an \textit{$\mathcal{A}$-hypercyclic vector}) such that, for any non-empty open subset $U$ of $X$, we have that
\[
\{n\geq 0 : T^nx\in U\}\in \mathcal{A}.
\]
Hypercyclicity (respectively frequent hypercyclicity, upper frequent hypercyclicity) is the special case for the Furstenberg family of all infinite sets (respectively of all sets of positive lower density or of all sets of positive upper density). $\mathcal{A}$-hypercyclicity has recently been studied in \cite{BMPP16} and \cite{BoGr17}.

A crucial idea in this paper is to consider operators whose orbits only allow one to approximate vectors from a closed subspace $Z$ of $X$. More precisely, $T$ is called \textit{$\mathcal{A}$-hypercyclic for $Z$} if there is a vector $x\in X$ (called \textit{$\mathcal{A}$-hypercyclic for $Z$}) such that, for any open subset $U$ of $X$ with $U\cap Z\neq\varnothing$, we have that
\[
\{n\geq 0 : T^nx\in U\}\in \mathcal{A}.
\] 
This restricted type of density for orbits was first considered by the author in \cite{Gro87}. The notion should not be confused with the recent concept of subspace hypercyclicity, see \cite{MaMa11}.

A \textit{Banach sequence space over $\IZ$} is a Banach space that is a subspace of the space $\mathbb{K}^{\mathbb{\IZ}}$ of all (real or complex) sequences and such that each coordinate functional $x=(x_n)_{n\in\IZ}\to x_k$, $k\in\IZ$, is continuous. The canonical unit sequences are denoted by $e_n=(\delta_{n,k})_{k\in\IZ}$. 

We will study bilateral weighted backward shifts $B_w$ on sequence spaces over $\IZ$. They are defined by
\[
B_w(x_n)_{n\in\IZ} = (w_{n+1}x_{n+1})_{n\in\IZ},
\]
where $w=(w_n)_{n\in\IZ}$ is a sequence of non-zero scalars. The unweighted shift (with $w_n=1$ for all $n\in\IZ$) is denoted by $B$.

Let $X$ be a Banach sequence space over $\IZ$ in which $(e_n)_{n\in \IZ}$ is a basis. It is well known that
\[
\vertiii{x} = \sup_{m,n\geq 0} \Big\|\sum_{-m\leq k\leq n} x_k e_k\Big\|
\] 
defines an equivalent norm on $X$. Since this value is defined for any sequence of scalars $x=(x_n)$ we may introduce the sequence space
\[
\widehat{X} = \{ x=(x_n)_{n\in\IZ} : \vertiii{x}<\infty\}.
\]
It is easily seen that $\widehat{X}$ is a Banach sequence space that contains $X$ as a closed subspace. The space appears, for example, in Singer \cite[p.~39]{Sin70} or in Bellenot \cite{Bel84}, but does not seem to have been given a name or a universally accepted notation. We have, for example, that $\widehat{c_0(\IZ)} = \ell^\infty(\IZ)$.

If, now, $B_w$ is a weighted backward shift operator on $X$ then a simple calculation shows that $B_w$ is also an operator on $\widehat{X}$. On the other hand, any weighted backward shift operator on $\widehat{X}$ maps $X$ into itself. Moreover, $B_w$ is invertible on $X$ if and only if it is invertible on $\widehat{X}$.

In the final section of this paper we will be looking at sequence spaces over $\IN_0$, with the obvious adaptations of the notions above.

\section{Bilateral shifts on $\widehat{X}$ for arbitrary Furstenberg families}\label{s-bilwsarb}

In the sequel we will write for simplicity a sequence $x=(x_n)$ as a formal series
\[
\sum_{n\in\IZ} x_n e_n
\]
without necessarily requiring the convergence of this series.

\begin{theorem}\label{t-bahcgen}
Let $X$ be a Banach sequence space over $\IZ$ in which $(e_n)_{n\in \IZ}$ is a basis. Suppose that the backward shift $B$ is an operator on $X$. Let $\mathcal{A}$ be a Furstenberg family. 

If for some (equivalently, for all) sequences $(\varepsilon_p)_{p\geq 1}$ of positive numbers tending to $0$ there is a sequence $(A_p)_{p\geq 1}$ of pairwise disjoint sets in $\mathcal{A}$ such that, for any $p,q\geq 1$, any $m\in A_q$, and $j=-p,\ldots,p$,
\begin{equation}\label{eq1}
\Vertiii{\sum_{\substack{n\in A_p\\n\neq m}} e_{n-m+j}} <\min(\varepsilon_p,\varepsilon_q),
\end{equation}
then $B:\widehat{X}\to \widehat{X}$ is $\mathcal{A}$-hypercyclic for $X$. If $(e_n)_{n\in \IZ}$ is an unconditional basis in $X$ then the converse also holds.

If $B$ is invertible, then it suffices to require \eqref{eq1} for $j=0$.
\end{theorem}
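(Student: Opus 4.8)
The plan is to show that, when $B$ is invertible on $\widehat{X}$, the hypothesis with \eqref{eq1} imposed only for $j=0$ already forces the full hypothesis, after which the main implication of the theorem applies. The starting point is purely algebraic: since $Be_k=e_{k-1}$, we have $B^{-j}e_k=e_{k+j}$ for every $j\in\IZ$, and therefore
\[
\sum_{\substack{n\in A_p\\ n\neq m}} e_{n-m+j} = B^{-j}\Big(\sum_{\substack{n\in A_p\\ n\neq m}} e_{n-m}\Big).
\]
Writing $C=\max(\|B\|,\|B^{-1}\|)$ for the operator norms on $\widehat{X}$ (finite because $B$ is invertible there, and necessarily $\geq 1$), we obtain $\|B^{-j}\|\leq C^{|j|}\leq C^{p}$ whenever $|j|\leq p$, so that
\[
\Vertiii{\sum_{\substack{n\in A_p\\ n\neq m}} e_{n-m+j}} \leq C^{p}\,\Vertiii{\sum_{\substack{n\in A_p\\ n\neq m}} e_{n-m}}.
\]
Thus every shifted quantity is controlled by the $j=0$ quantity, at the cost of a factor $C^{p}$.

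This factor $C^{p}$ is the main obstacle, as it grows with $p$ while the target bound $\min(\varepsilon_p,\varepsilon_q)$ does not, so one cannot simply absorb it into the given sequence. I would defeat it by first making the $j=0$ bound decay geometrically, using the freedom to pass to a subfamily. Suppose the $j=0$ version of the hypothesis holds for some sequence $(\varepsilon'_p)\to 0$ and pairwise disjoint sets $(C_p)$ in $\mathcal{A}$. Fix any prescribed decreasing sequence $(\varepsilon_p)\to 0$ for the full condition. Since $\varepsilon'_k\to 0$, I can choose a strictly increasing $\sigma\colon\IN\to\IN$ with $\varepsilon'_{\sigma(p)}\leq \varepsilon_p\,C^{-p}$ for all $p$, and set $A_p:=C_{\sigma(p)}$. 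These sets are again pairwise disjoint and lie in $\mathcal{A}$, and the reindexing leaves the shift parameter $j$ untouched, so the $j=0$ inequality for $(C_p)$ transfers to $A_p$ with the sharper bound: for $m\in A_q=C_{\sigma(q)}$,
\[
\Vertiii{\sum_{\substack{n\in A_p\\ n\neq m}} e_{n-m}} < \min(\varepsilon'_{\sigma(p)},\varepsilon'_{\sigma(q)}) \leq \min\big(\varepsilon_p\,C^{-p},\,\varepsilon_q\,C^{-q}\big).
\]

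Combining the two displays finishes the argument. For $m\in A_q$ and $|j|\leq p$,
\[
\Vertiii{\sum_{\substack{n\in A_p\\ n\neq m}} e_{n-m+j}} \leq C^{p}\,\Vertiii{\sum_{\substack{n\in A_p\\ n\neq m}} e_{n-m}} < C^{p}\,\min\big(\varepsilon_p\,C^{-p},\,\varepsilon_q\,C^{-q}\big).
\]
Here I exploit that $(\varepsilon_p\,C^{-p})_p$ is decreasing (a product of positive decreasing sequences), so the minimum is attained at $r=\max(p,q)$ and equals $\varepsilon_r\,C^{-r}$; since $p\leq r$ and $C\geq 1$ we get $C^{p}\varepsilon_r C^{-r}=\varepsilon_r C^{p-r}\leq\varepsilon_r=\min(\varepsilon_p,\varepsilon_q)$. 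This is exactly \eqref{eq1} for all $j=-p,\ldots,p$ with the sequence $(\varepsilon_p)$, so the full hypothesis holds and the part of the theorem already proved yields that $B$ is $\mathcal{A}$-hypercyclic for $X$. The only delicate point is the bookkeeping of where the minimum over $p,q$ is attained, which is precisely why I arrange both $(\varepsilon_p)$ and the auxiliary bound $\varepsilon_p\,C^{-p}$ to be decreasing and pair this with the constraint $|j|\leq p\leq\max(p,q)$.
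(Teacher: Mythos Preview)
Your argument for the final assertion is correct and is essentially the paper's own proof: both set $C=\max(\vertiii{B},\vertiii{B^{-1}})$, bound the $j$-shifted sum by $C^{p}$ times the $j=0$ sum via $B^{-j}e_{k}=e_{k+j}$, and then absorb the factor $C^{p}$ into the $\varepsilon$-sequence. The only cosmetic difference is that the paper chooses $(\varepsilon_p)$ from the outset so that $(C^{p}\varepsilon_p)$ is monotonically decreasing to $0$ and passes directly to $\widetilde{\varepsilon}_p:=C^{p}\varepsilon_p$, whereas you start from an arbitrary $(\varepsilon'_p)$ and reindex via a subsequence $\sigma$; the two bookkeeping devices are interchangeable.
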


The proof is similar to that of \cite[Theorem 6.1]{BoGr17}.

\begin{proof} In order to see that the assumption `for some sequences $(\varepsilon_p)_{p\geq 1}$' implies the assumption `for all sequences $(\varepsilon_p)_{p\geq 1}$' it suffices to pass to a subsequence of $(A_p)_{p\geq 1}$. 

For the final assertion of the theorem, let $M=\max(\vertiii{B}, \vertiii{B^{-1}})$. We then choose $(\varepsilon_p)_{p\geq 1}$ in such a way that $(\widetilde{\varepsilon}_p):=(M^p\varepsilon_p)$ converges monotonically to zero. Hence, for $j=-p,\ldots,p$,
\[
\Vertiii{\sum _{\substack{n\in A_p\\n\neq m}} e_{n-m+j}} \leq M^p \Vertiii{\sum _{\substack{n\in A_p\\n\neq m}} e_{n-m}}<M^p\min(\varepsilon_p,\varepsilon_q) \leq \min(\widetilde{\varepsilon}_p,\widetilde{\varepsilon}_q),
\]
and the claim follows.

We first show that, in addition to the hypothesis, we may assume that, for an arbitrarily fixed sequence $(\alpha_p)_{p\geq 1}$ of positive numbers we have, for any $p\geq 1$, 
\begin{equation}\label{eq2}
\Vertiii{\sum_{\substack{n\in A_p}} e_{n+p}} <\alpha_p.
\end{equation}
Indeed, we start with sequences $(\varepsilon_p)$ and $(A_p)$. We fix an element $m\in A_1$, $m>0$. Then there exists a strictly increasing sequence $(r_p)_{p\geq 1}$ of positive integers such that $r_1>1$, $r_p\geq m+p$ and $\varepsilon_{r_p}\leq \min(\varepsilon_p,\alpha_p)$ for $p\geq 1$.  Let 
\[
\widetilde{A}_p = A_{r_p},  \quad p\geq 1.
\]
Then in view of $m+p\leq r_p$ we have that
\[
\Vertiii{\sum _{\substack{n\in \widetilde{A}_p}} e_{n+p}} = \Vertiii{\sum _{\substack{n\in A_{r_p}}} e_{n-m+(m+p)}}<\varepsilon_{r_p}\leq \alpha_p.
\]
And condition \eqref{eq1} remains valid for the sequence $(\widetilde{A}_p)$. Thus we may assume \eqref{eq2} in addition to \eqref{eq1}. 

We will now show that $B:\widehat{X}\to \widehat{X}$ is $\mathcal{A}$-hypercyclic for $X$. We set, for $p\geq 1$,
\[
\alpha_p = \frac{1}{2^pp \sum_{j=-p}^p \vertiii{B}^{p-j}}\quad\text{and}\quad \varepsilon_p = \frac{1}{p(2p+1)4^p}.
\]
Let $(A_p)_{p\geq 1}$ be a sequence of pairwise disjoint sets in $\mathcal{A}$ such that \eqref{eq1} and \eqref{eq2} hold.

Since $(e_n)_{n\in\IZ}$ is a basis in $X$, there is a dense sequence $(y^{(p)})_{p\geq 1}$ of elements in $X$ of the form
\[
y^{(p)} = \sum_{j=-p}^p y^{(p)}_j e_j,\quad \max_{-p\leq j\leq p}|y_j^{(p)}|\leq p.
\]
We set 
\[
x=\sum_{p=1}^\infty  \sum_{j=-p}^p y^{(p)}_j \sum_{n\in A_p} e_{n+j}
\]
(only the interior sum is formal) and claim that $x$ is a well-defined element of $\widehat{X}$ that is $\mathcal{A}$-hypercyclic for $X$. Indeed, by \eqref{eq2}, 
\[
\sum_{j=-p}^p y^{(p)}_j B^{p-j}\sum_{n\in A_p} e_{n+p}= \sum_{j=-p}^p y^{(p)}_j \sum_{n\in A_p}e_{n+j}
\]
belongs to $\widehat{X}$. Moreover,
\begin{align*}
\Vertiii{ \sum_{j=-p}^p y^{(p)}_j \sum_{n\in A_p}e_{n+j}}&\leq 
\sum_{j=-p}^p |y^{(p)}_j| \Vertiii{B^{p-j}\sum_{n\in A_p} e_{n+p}} \\&\leq 
p \sum_{j=-p}^p \vertiii{B}^{p-j}\Vertiii{\sum_{n\in A_p} e_{n+p}}\leq p \sum_{j=-p}^p \vertiii{B}^{p-j}\alpha_p= \frac{1}{2^p},
\end{align*}
which implies that $x$ defines an element in $\widehat{X}$.

Now let $q\geq 1$. Then we have for any $m\in A_q$ that
\[
B^mx-y^{(q)} = \sum_{p=1}^\infty  \sum_{j=-p}^p y^{(p)}_j \sum_{n\in A_p}e_{n-m+j}-\sum_{j=-q}^q y^{(q)}_je_j= \sum_{p=1}^\infty  \sum_{j=-p}^p y^{(p)}_j \sum_{\substack{n\in A_p\\n\neq m}}e_{n-m+j}; 
\]
note that the terms $n= m$ disappear: if $p\neq q$ then use the fact that $A_p$ and $A_q$ are disjoint; if $p=q$ then the term with $n=m$ cancels. By \eqref{eq1} and the choice of $(\varepsilon_p)$ we have that for $j=-p,\ldots,p$, if $p<q$ then
\[
\Vertiii{\sum_{\substack{n\in A_p\\n\neq m}}  e_{n-m+j}} < \frac{1}{q(2q+1)4^q}\leq \frac{1}{p(2p+1)2^p2^q}, 
\]
while if $p\geq q$ then 
\[
\Vertiii{\sum_{\substack{n\in A_p\\n\neq m}}  e_{n-m+j}} < \frac{1}{p(2p+1)4^p}\leq\frac{1}{p(2p+1)2^p2^q}.
\]
Altogether we have that, for any $q\geq 1$ and $m\in A_q$, 
\[
\vertiii{B^mx-y^{(q)}} \leq  \sum_{p=1}^\infty \sum_{j=-p}^p |y^{(p)}_j|\Vertiii{\sum_{\substack{n\in A_p\\n\neq m}}  e_{n-m+j}}\leq \sum_{p=1}^\infty\frac{1}{2^p2^q} = \frac{1}{2^q},
\]
so that $x$ is $\mathcal{A}$-hypercyclic for $X$.

For the converse we now suppose that $(e_n)_{n\in\IZ}$ is an unconditional basis of $X$. We will need the following properties. 
\begin{itemize}
	\item[(F)] There is a sequence $(\beta_p)_{p\geq 1}$ of positive numbers such that, for any $p\geq 1$, $\vertiii{x}<\beta_p$ implies that $|x_{j}|< 1$ for $j=-p,\ldots,p$.
\end{itemize}
This follows from the continuity of the coordinate projections.
\begin{itemize}
	\item[(M)] If $\sum_{n\in\IZ}x_ne_n\in \widehat{X}$ and $(a_n)_{n\in\IZ}$ is a bounded sequence of scalars then $\sum_{n\in\IZ}a_nx_ne_n\in \widehat{X}$. Moreover, there is an absolute constant $C>0$ such that 
\[	
\Vertiii{\sum_{n\in\IZ}a_nx_ne_n}\leq C \sup_{n\in\IZ}|a_n|\, \Vertiii{\sum_{n\in\IZ}x_ne_n}.
\]
\end{itemize}
This is, for $x\in X$, a consequence of the unconditionality of the basis and extends directly to all of $\widehat{X}$.

Let $x\in \widehat{X}$ be an $\mathcal{A}$-hypercyclic vector for $X$, and let $(\varepsilon_p)_{p\geq 1}$ be a decreasing sequence of positive numbers with $\varepsilon_p\leq 1$, $p\geq 1$. Let $(\beta_p)_{p\geq 1}$ and $C>0$ be given by properties (F) and (M). We then define inductively numbers $C_p\geq \max(1,C)$ for $p\geq 1$ such that if $q=p\geq 1$ with $-p\leq j < k\leq p$, or if $q>p\geq 1$ with $-p\leq j\leq p$ and $-q\leq k\leq q$, then
\begin{equation}\label{eq-dpf}
(k+q+1)\frac{C_q}{\varepsilon_q}-(j+p+1)\frac{C_p}{\varepsilon_p}\geq 2.
\end{equation}

Let
\[
y^{(p)} = \sum_{j=-p}^p\Big(1+(j+p+1)\frac{C_p}{\varepsilon_p}\Big) e_j\in X,\quad p\geq 1,
\]
and choose numbers $\rho_p>0$ such that the open balls of radius $\rho_p$ in $\widehat{X}$ around $y^{(p)}$, $p\geq 1$, are pairwise disjoint. Then, by $\mathcal{A}$-hypercyclicity of $x$, there are sets $A_p\in\mathcal{A}$, $p\geq 1$, such that, for all $n\in A_p$, 
\begin{equation}\label{eq-dpf2}
\vertiii{B^nx-y^{(p)}}<\min \Big(\rho_p,\beta_{p},\frac{\varepsilon_p}{C_p}\Big).
\end{equation}
It follows that the sets $A_p$, $p\geq 1$, are pairwise disjoint. We will show that they satisfy the hypothesis of the theorem.

We deduce from \eqref{eq-dpf2} with (F) that, for any $n\in A_p$ and $j=-p,\ldots,p$,
\begin{equation}\label{eq-dp}
\Big|x_{n+j}-\Big(1+(j+p+1)\frac{C_p}{\varepsilon_p}\Big)\Big|<1,
\end{equation}
hence
\begin{equation}\label{eq-dp2}
\Big|\frac{1}{x_{n+j}}\Big|<\frac{\varepsilon_p}{(j+p+1)C_p}\leq \varepsilon_p. 
\end{equation}

Now let $p,q\geq 1$, $m\in A_q$ and $j=-p,\ldots,p$. Then 
\[
\vertiii{B^mx-y^{(q)}} < \frac{\varepsilon_q}{C_q},
\]
and hence with \eqref{eq-dp2} in view of (M)
\begin{equation}\label{eq-M}
\Vertiii{\sum_{\substack{n\in A_p\\n\neq m}} \frac{1}{x_{n+j}}[B^mx-y^{(q)}]_{n-m+j} e_{n-m+j}}<\varepsilon_p\varepsilon_q\leq\min(\varepsilon_p,\varepsilon_q),
\end{equation}
where $[z]_k$ denotes the $k$th entry of the sequence $z$. Let $n\in A_p$, $n\neq m$, and $k=-q,\ldots,q$. Then by \eqref{eq-dp} we have that
\[
\Big|x_{n+j} -x_{m+k}-(j+p+1)\frac{C_p}{\varepsilon_p}+(k+q+1)\frac{C_q}{\varepsilon_q}\Big|<2.
\]
This implies that $n+j\neq m+k$. Indeed, equality can hold by \eqref{eq-dpf} only if $p=q$ and $j=k$, which is impossible since $m\neq n$. Thus we have that $|n-m+j|>q$ and therefore
\[
[B^mx-y^{(q)}]_{n-m+j}=x_{n+j}.
\]
Hence \eqref{eq-M} reduces to the hypothesis of the theorem.
\end{proof}

\begin{remark}\label{r-spread}
For later use we note that one may assume in addition that, for any $p,q\geq 1$,
\[
\min_{\substack{n\in A_p, m\in A_q\\n\neq m}}|n-m|>p+q.
\]
Indeed, let us choose $\varepsilon_p\leq \beta_p$ for $p\geq 1$, where the $\beta_p$ satisfy the condition in (F). Then \eqref{eq1} implies that, for any $p,q\geq 1$, any $m\in A_q$, $k=-q,\ldots,q$, any $n\in A_p$, $j=-p,\ldots,p$,
\[
n-m+j\neq k,
\]
in other words $|n-m|> p+q$.
\end{remark}

General weighted shifts can now be treated in the way described in \cite[Section 4.1]{GrPe11}. Let $B_w$ be a bilateral weighted backward shift operator on the sequence space $X$. We define $v=(v_n)_{n\in\mathbb{Z}}$ by
\begin{equation}\label{vns}
v_n=\begin{cases} \frac{1}{w_1\cdots w_n},& \text{for $n>0$},\\
1,& \text{for $n=0$},\\
{w_{n+1}\cdots w_0},& \text{for $n < 0$},
\end{cases}
\end{equation}
and we set
\[
X(v)=\{(x_n) : (x_nv_n)\in X\}.
\]
Then  $B_w$ on $X$ is conjugate to $B$ on $X(v)$ via the bijection $\phi_v: X(v)\to X$, $(x_n)\to (x_nv_n)$, see \cite[pp.~100-101]{GrPe11}.

\begin{theorem}\label{t-bwahcgen}
Let $X$ be a Banach sequence space over $\IZ$ in which $(e_n)_{n\in \IZ}$ is a basis. Suppose that the weighted backward shift $B_w$ is an operator on $X$, and define $v$ by \eqref{vns}. Let $\mathcal{A}$ be a Furstenberg family. 

If for some (equivalently, for all) sequences $(\varepsilon_p)_{p\geq 1}$ of positive numbers tending to $0$ there is a sequence $(A_p)_{p\geq 1}$ of pairwise disjoint sets in $\mathcal{A}$ such that, for any $p,q\geq 1$, any $m\in A_q$, and $j=-p,\ldots,p$,
\begin{equation}\label{eq10}
\Vertiii{\sum_{\substack{n\in A_p\\n\neq m}} v_{n-m+j} e_{n-m+j}} <\min(\varepsilon_p,\varepsilon_q),
\end{equation}
then $B_w:\widehat{X}\to \widehat{X}$ is $\mathcal{A}$-hypercyclic for $X$. If $(e_n)_{n\in \IZ}$ is an unconditional basis in $X$ then the converse also holds.

If $B_w$ is invertible, then it suffices to require \eqref{eq10} for $j=0$.
\end{theorem}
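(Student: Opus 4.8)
The plan is to reduce everything to Theorem \ref{t-bahcgen} by exploiting the conjugacy between $B_w$ on $X$ and $B$ on $X(v)$ recalled just above the statement. The point is that this conjugacy should transport, essentially verbatim, the basis structure, the hat-space construction, the condition \eqref{eq1}, and the notion of $\mathcal{A}$-hypercyclicity for a subspace.

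First I would record the elementary transfer facts. By construction $\phi_v\colon X(v)\to X$, $(x_n)\mapsto(x_nv_n)$, is an isometric isomorphism, the norm on $X(v)$ being $\|x\|_{X(v)}=\|(x_nv_n)\|_X$. Since $\phi_v(e_n)=v_ne_n$ with $v_n\neq 0$, the sequence $(e_n)_{n\in\IZ}$ is a basis of $X(v)$, and it is unconditional in $X(v)$ exactly when it is unconditional in $X$. Moreover $B$ is an operator on $X(v)$ because $\phi_v B=B_w\phi_v$ and $B_w$ is an operator on $X$; likewise $B$ is invertible on $X(v)$ iff $B_w$ is invertible on $X$. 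Thus the hypotheses of Theorem \ref{t-bahcgen} are available for $B$ on $X(v)$.

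The key computation is the behaviour of the equivalent norm $\vertiii{\cdot}$ under $\phi_v$. Building $\vertiii{\cdot}$ from the original norm of $X(v)$ gives
\[
\vertiii{x}_{X(v)}=\sup_{m,n\geq 0}\Big\|\sum_{-m\leq k\leq n}x_ke_k\Big\|_{X(v)}=\sup_{m,n\geq 0}\Big\|\sum_{-m\leq k\leq n}x_kv_ke_k\Big\|_X=\vertiii{(x_kv_k)_{k}}_X .
\]
Hence $\phi_v$ is an isometry for the $\vertiii{\cdot}$-norms as well, so that $\widehat{X(v)}=\{x:(x_nv_n)\in\widehat{X}\}$ and $\phi_v$ extends to an isometric isomorphism of $\widehat{X(v)}$ onto $\widehat{X}$ still satisfying $\phi_v B=B_w\phi_v$. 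Applying the displayed identity to $x=\sum_{n\in A_p,\,n\neq m}e_{n-m+j}$ turns condition \eqref{eq1} for $B$ on $X(v)$ into precisely condition \eqref{eq10} for $B_w$ on $X$.

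It then remains to observe that $\mathcal{A}$-hypercyclicity for a subspace is a conjugacy invariant: since $\phi_v$ maps the closed subspace $X(v)$ of $\widehat{X(v)}$ isometrically onto the closed subspace $X$ of $\widehat{X}$ and intertwines $B$ with $B_w$, for any open $U\subset\widehat{X}$ with $U\cap X\neq\varnothing$ the set $\phi_v^{-1}(U)$ is open in $\widehat{X(v)}$, meets $X(v)$, and $\{n:B^nx\in\phi_v^{-1}(U)\}=\{n:B_w^n\phi_v(x)\in U\}$; so membership in $\mathcal{A}$ transfers, and a vector $x$ is $\mathcal{A}$-hypercyclic for $X(v)$ iff $\phi_v(x)$ is $\mathcal{A}$-hypercyclic for $X$. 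With this dictionary, the direct implication, the converse under unconditionality, and the reduction to $j=0$ in the invertible case all follow at once from Theorem \ref{t-bahcgen} applied to $B$ on $X(v)$. I do not expect a genuine obstacle here: the only point demanding care is the bookkeeping that $\phi_v$ is an isometry for both the original and the $\vertiii{\cdot}$-norms and that $\widehat{X(v)}$ coincides with the $v$-weighted version of $\widehat{X}$; once this is checked the theorem is a direct translation with no new difficulty.
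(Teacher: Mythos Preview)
Your proposal is correct and follows exactly the approach the paper intends: the paper does not even write out a separate proof of this theorem, but simply sets up the conjugacy $\phi_v\colon X(v)\to X$ in the paragraph preceding the statement and leaves the translation to the reader. Your careful verification that $\phi_v$ carries the basis, the unconditionality, the $\vertiii{\cdot}$-norm, the hat-space, the invertibility of the shift, condition \eqref{eq1} into condition \eqref{eq10}, and $\mathcal{A}$-hypercyclicity for the subspace is precisely what is needed to justify this reduction to Theorem~\ref{t-bahcgen}.
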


\begin{remark}\label{r-equivcond}
The hypothesis can be rephrased equivalently in a more natural way:

\textit{For some (equivalently, for all) sequences $(\varepsilon_p)_{p\geq 1}$ of positive numbers tending to $0$ there is a sequence $(A_p)_{p\geq 1}$ of pairwise disjoint sets in $\mathcal{A}$ such that
\begin{itemize}
\item[\rm (i)] for any $p,q\geq 1$
\[
\min_{\substack{n\in A_p, m\in A_q\\n\neq m}}|n-m|>p+q;
\]
\item[\rm (ii)] for any $p,q\geq 1$, any $m\in A_q$, and any $j=-p,\ldots,p$,
\[
\Vertiii{\sum _{\substack{n\in A_p\\n< m}} \Big(\prod_{\nu=n-m+j+1}^{0}w_\nu\Big) e_{n-m+j}} <\min(\varepsilon_p,\varepsilon_q)
\]
and
\[
\Vertiii{\sum _{\substack{n\in A_p\\n> m}} \frac{1}{\prod_{\nu=1}^{n-m+j}w_\nu}e_{n-m+j}} <\min(\varepsilon_p,\varepsilon_q).
\]
\end{itemize}
}
Indeed, by Remark \ref{r-spread}, one may assume condition (i) without loss of generality. In that case we have, whenever $m\in A_q$, $n\in A_p$ and $j=-p,\ldots,p$, that $n-m+j<0$ if $n< m$ and $n-m+j>0$ if $n> m$. Thus the single norm in \eqref{eq10} can be split equivalently into the two norms above, up to an irrelevant factor 2.
\end{remark}

\section{From hypercyclicity \textit{for} $X$ to hypercyclicity \textit{in} $X$}\label{s-fortoin}

Theorem \ref{t-bwahcgen} provides a characterisation of when a weighted backward shift on $\widehat{X}$ is $\mathcal{A}$-hypercyclic for $X$. 
We will improve this result in the case when $\mathcal{A}$ is an upper Furstenberg family. This will follow from a more general result for arbitrary operators on Banach spaces that we derive in this section; this result might also be of independent interest.

Let us first recall the notion of upper Furstenberg family, which was recently introduced in \cite{BoGr17}.

\begin{definition}\label{d-upperfur}
A Furstenberg family $\mathcal{A}$ is called \emph{upper} if it does not contain the empty set and it can be written as
\[
\mathcal{A} = \bigcup_{\delta\in D} \mathcal{A}_{\delta}\quad \mbox{with}\quad \mathcal{A}_{\delta}:=\bigcap_{\mu\in M} \mathcal{A}_{\delta,\mu}
\]
for some families $\mathcal{A}_{\delta,\mu}$ ($\delta\in D,\mu\in M$), where $D$ is arbitrary but $M$ is countable, and such that
\begin{itemize}
\item[(i)] for any $A\in \mathcal{A}_{\delta,\mu}$ there is a finite set $F\subset\mathbb{N}_0$ such that if $B\supset A\cap F$ then $B\in\mathcal{A}_{\delta,\mu}$;
\item[(ii)] for any $A\in \mathcal{A}$ there is some $\delta\in D$ such that, for all $n\geq 0$, $A-n\in \mathcal{A}_\delta$.
\end{itemize}
\end{definition}

For example, the family of sets of positive upper density is an upper Furstenberg family. Thus the results in this section hold, in particular, for upper frequent hypercyclicity.

In \cite{BoGr17} the authors have obtained a Birkhoff type theorem for upper Furstenberg families; the proof is by a simple application of the Baire category theorem. We will need here only the following implication. Let $T$ be an operator on a separable Banach space $X$ and $\mathcal{A} = \bigcup_{\delta\in D}\bigcap_{\mu\in M} \mathcal{A}_{\delta,\mu}$ an upper Furstenberg family. If
\begin{itemize}
	\item[(B)] for any non-empty open subset $V$ of $X$ there is some $\delta\in D$ such that for any non-empty open subset $U$ of $X$ and any $\mu\in M$ there is some $x\in U$ such that
\[
\{n\geq 0 : T^n x\in V\} \in \mathcal{A}_{\delta,\mu},
\]
\end{itemize}
then $T$ is $\mathcal{A}$-hypercyclic.  

In addition, we need a necessary condition for $\mathcal{A}$-hypercyclicity for a subspace.

\begin{proposition}\label{p-Birksub}
Let $Y$ be a Banach space, $X$ a closed subspace of $Y$, and $T$ an operator on $Y$. Let $\mathcal{A} = \bigcup_{\delta\in D}\bigcap_{\mu\in M} \mathcal{A}_{\delta,\mu}$ be an upper Furstenberg family. If $T$ is $\mathcal{A}$-hypercyclic for $X$ then, for any open subset $V$ of $Y$ with $V\cap X\neq \varnothing$, there is some $\delta\in D$ such that for any open subset $U$ of $Y$ with $U\cap X\neq \varnothing$ there is some $y\in U$ such that
\[
\{n\geq 0 : T^n y\in V\} \in \mathcal{A}_{\delta}.
\]
\end{proposition}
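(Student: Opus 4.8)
The plan is to extract everything from the single vector witnessing $\mathcal{A}$-hypercyclicity for $X$, together with the shift-invariance built into part (ii) of the definition of an upper Furstenberg family. Neither condition (i) nor the Baire category theorem will be needed here, so this proposition is the purely \emph{necessary} counterpart of the sufficient condition (B) recalled above. Throughout, let $x\in Y$ be a vector that is $\mathcal{A}$-hypercyclic for $X$.

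First I would fix an open set $V$ with $V\cap X\neq\varnothing$. By hypothesis the return set $A_V:=\{n\geq 0 : T^n x\in V\}$ lies in $\mathcal{A}$. Applying property (ii) to $A_V$ produces an index $\delta\in D$ with the uniform property that $A_V-n\in\mathcal{A}_\delta$ for every $n\geq 0$. This $\delta$ depends only on $V$ (through $A_V$), which is precisely the quantifier order demanded by the statement.

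Next I would fix an open set $U$ with $U\cap X\neq\varnothing$. Again by $\mathcal{A}$-hypercyclicity for $X$, the set $\{n\geq 0 : T^n x\in U\}$ belongs to $\mathcal{A}$, and since an upper Furstenberg family does not contain the empty set, this set is non-empty; hence there is some $N\geq 0$ with $T^N x\in U$. I would then take $y:=T^N x\in U$ as the required vector. The crux is that the return set of $y$ to $V$ is a shift of $A_V$: indeed
\[
\{n\geq 0 : T^n y\in V\}=\{n\geq 0 : T^{n+N}x\in V\}=\{n\geq 0 : n+N\in A_V\}=A_V-N,
\]
and by the choice of $\delta$ in the previous step this set lies in $\mathcal{A}_\delta$, which completes the argument.

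Since the proof is short, the only point that requires genuine care is the bookkeeping in this last display: one must recognise the return set of the shifted vector $T^N x$ as the shifted set $A_V-N$, and then invoke the \emph{uniformity} in (ii), which is what guarantees membership in a single $\mathcal{A}_\delta$ for all shifts $N$ simultaneously. It is worth emphasising that the subspace hypothesis enters only to ensure that $A_V\in\mathcal{A}$ and that $\{n : T^n x\in U\}$ is non-empty for open sets meeting $X$; the argument is otherwise identical to the one for ordinary $\mathcal{A}$-hypercyclicity.
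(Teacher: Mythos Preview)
Your proof is correct and follows essentially the same approach as the paper: fix the $\mathcal{A}$-hypercyclic vector, use property~(ii) of upper Furstenberg families to pick $\delta$ from the return set to $V$, then use hypercyclicity to land an iterate in $U$ and identify its return set to $V$ as a shift. Your explicit mention that $\mathcal{A}$ does not contain the empty set to justify that the orbit actually meets $U$ is a nice touch that the paper leaves implicit.
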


\begin{proof} We fix an $\mathcal{A}$-hypercyclic vector $y'$ for $X$ under $T$. Let $V$ be an open subset of $Y$ with $V\cap X\neq \varnothing$. Then
\[
\{n\geq 0 : T^n y' \in V\} \in \mathcal{A}.
\]
By property (ii) of upper Furstenberg families there is some $\delta\in D$ such that, for all $m\geq 0$,
\[
\{n\geq 0 : T^n y' \in V\}-m \in \mathcal{A}_{\delta}.
\]
Now let $U$ be an open subset of $Y$ with $U\cap X\neq \varnothing$. Again by $\mathcal{A}$-hypercyclicity there is some $m\geq 0$ such that $y:=T^my'\in U$, and therefore
\[
\{n\geq 0 : T^ny\in V\} = \{k\geq 0 : T^ky'\in V\}-m \in \mathcal{A}_{\delta},
\]
which had to be shown.
\end{proof}

We can now derive the announced result.

\begin{theorem}\label{t-transfer}
Let $T$ be an operator on a Banach space $Y$ and $X$ a closed subspace of $Y$ that is invariant under $T$. Let $\mathcal{A}$ be an upper Furstenberg family. Suppose that the following property holds: 

For all $x_2\in X$ and $\varepsilon_2>0$ there are $x_2'\in X$ and $\eta_2>0$ such that for all $x_1\in X$ and $\varepsilon_1>0$ there are $x'_1\in X$ and $\eta_1>0$ so that for all $y\in Y$ and all finite subsets $F\subset \mathbb{N}_0$, if
\begin{equation}\label{eq-transfer1}
\|x'_1-y\|<\eta_1 \text{ and }  \|x'_2-T^ny\|<\eta_2 \text{ for all $n\in F$} 
\end{equation}
then there is some $x\in X$ such that
\begin{equation}\label{eq-transfer2}
\|x_1-x\|<\varepsilon_1 \text{ and }  \|x_2-T^nx\|<\varepsilon_2 \text{ for all $n\in F$}.
\end{equation}
Then the following assertions are equivalent:
\begin{itemize}
\item[\emph{(a)}] $T:Y\to Y$ is $\mathcal{A}$-hypercyclic for $X$. 
\item[\emph{(b)}] $T:X\to X$ is $\mathcal{A}$-hypercyclic. 
\end{itemize}
\end{theorem}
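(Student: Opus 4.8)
The plan is to prove the two implications separately, with $(b)\Rightarrow(a)$ being immediate and $(a)\Rightarrow(b)$ carrying all the weight. For $(b)\Rightarrow(a)$ I would simply regard an $\mathcal{A}$-hypercyclic vector $x\in X$ for $T|_X$ as an element of $Y$. Since $X$ is $T$-invariant and $x\in X$, the whole orbit stays in $X$, so for any open $U\subseteq Y$ with $U\cap X\neq\varnothing$ the set $U\cap X$ is non-empty open in $X$ and $\{n\geq 0: T^nx\in U\}=\{n\geq 0: T^nx\in U\cap X\}\in\mathcal{A}$. The transfer hypothesis is not needed here.

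For $(a)\Rightarrow(b)$ the strategy is to verify condition (B) for $T:X\to X$ and then invoke the Birkhoff-type theorem for upper Furstenberg families recalled before Proposition \ref{p-Birksub}. First I note that (a) forces $X$ to be separable: an $\mathcal{A}$-hypercyclic vector $x$ for $X$ has an orbit meeting every ball $B_Y(x_0,\varepsilon)$ with $x_0\in X$ (because an upper Furstenberg family omits $\varnothing$), so $X\subseteq\overline{\{T^nx:n\geq 0\}}$, a separable set.

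The core of the argument translates open sets in $X$ into open balls in $Y$ via the transfer property and pulls the conclusion of Proposition \ref{p-Birksub} back. Given a non-empty open $V\subseteq X$, I fix $x_2\in V$ and $\varepsilon_2>0$ with $B_Y(x_2,\varepsilon_2)\cap X\subseteq V$, and feed $(x_2,\varepsilon_2)$ to the transfer property to obtain $x'_2\in X$ and $\eta_2>0$; I set $V_Y:=B_Y(x'_2,\eta_2)$ and apply Proposition \ref{p-Birksub} to $V_Y$ to obtain $\delta\in D$. Crucially $\delta$ depends only on $V$, as required by (B), since $V_Y$ is built from $V$ alone. Now for any non-empty open $U\subseteq X$ and any $\mu\in M$, I fix $x_1\in U$ and $\varepsilon_1>0$ with $B_Y(x_1,\varepsilon_1)\cap X\subseteq U$, feed $(x_1,\varepsilon_1)$ to the already specialised transfer property to get $x'_1\in X$ and $\eta_1>0$, and set $U_Y:=B_Y(x'_1,\eta_1)$. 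Proposition \ref{p-Birksub} then supplies $y\in U_Y$ with $A:=\{n\geq 0: T^ny\in V_Y\}\in\mathcal{A}_\delta\subseteq\mathcal{A}_{\delta,\mu}$. I would apply property (i) of Definition \ref{d-upperfur} to $A\in\mathcal{A}_{\delta,\mu}$ to obtain a finite $F_0\subseteq\mathbb{N}_0$ such that every superset of $A\cap F_0$ lies in $\mathcal{A}_{\delta,\mu}$, and then run the transfer property with this $y$ and the finite set $F:=A\cap F_0$: its hypothesis \eqref{eq-transfer1} holds because $y\in U_Y$ gives $\|x'_1-y\|<\eta_1$ and $F\subseteq A$ gives $\|x'_2-T^ny\|<\eta_2$ for all $n\in F$. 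The property then produces $x\in X$ satisfying \eqref{eq-transfer2}, i.e. $\|x_1-x\|<\varepsilon_1$ (so $x\in U$) and $\|x_2-T^nx\|<\varepsilon_2$ for all $n\in F$; since $T^nx\in X$, this yields $T^nx\in V$ for every $n\in F$, hence $A\cap F_0\subseteq\{n\geq 0: T^nx\in V\}$ and therefore $\{n\geq 0: T^nx\in V\}\in\mathcal{A}_{\delta,\mu}$ by the choice of $F_0$. This is exactly (B), and the Birkhoff-type theorem gives $\mathcal{A}$-hypercyclicity of $T$ on $X$.

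The hard part will be the bookkeeping of quantifiers: the nested $\forall\exists\forall\exists\forall$ shape of the transfer hypothesis must be matched against the $\forall V\,\exists\delta\,\forall U,\mu\,\exists x$ shape of (B), and the finite set $F_0$ produced by property (i) (which depends on $\mu$) must be precisely the one fed back into the transfer property. Arranging that $\delta$ depends on $V$ alone, through the intermediate ball $V_Y$, is the delicate point that makes the whole scheme close up.
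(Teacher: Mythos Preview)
Your proof is correct and follows essentially the same route as the paper: verify the Birkhoff-type condition (B) for $T|_X$ by translating balls in $X$ into balls $V_Y,U_Y$ in $Y$ via the transfer hypothesis, invoke Proposition~\ref{p-Birksub} to find a $\delta$ and a $y\in U_Y$, cut down to a finite set using property~(i) of Definition~\ref{d-upperfur}, and then apply the transfer hypothesis to replace $y$ by some $x\in X$. Your explicit remark that (a) forces $X$ to be separable is a point the paper leaves implicit but is indeed needed for the Birkhoff-type theorem to apply.
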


\begin{proof} Since (b) trivially implies (a) we need only deduce (b) from (a).

We write $\mathcal{A} = \bigcup_{\delta\in D}\bigcap_{\mu\in M} \mathcal{A}_{\delta,\mu}$ according to Definition \ref{d-upperfur}.
Let $T:Y\to Y$ be $\mathcal{A}$-hypercyclic for $X$. We want to show that $T:X\to X$ satisfies the Birkhoff type condition (B) stated above.
To see this, let $V$ be a non-empty open subset of $X$. Without loss of generality we may assume that $V=B(x_2,\varepsilon_2)$, the open ball of radius $\varepsilon_2>0$ in $X$ around a point $x_2\in X$. Let $V_Y=B(x'_2,\eta_2)$ be the open ball taken in $Y$, where $x'_2\in X$ and $\eta_2>0$ are given by the hypothesis. Let $\delta>0$ be a value associated to $V_Y$ according to Proposition \ref{p-Birksub}. 

Next, let $U$ be a non-empty open subset of $X$. Without loss of generality we may assume that $U=B(x_1,\varepsilon_1)$ with $x_1\in X$ and $\varepsilon_2>0$. Let $U_Y=B(x'_1,\eta_1)$ be the ball taken in $Y$, where $x'_1\in X$ and $\eta_1>0$ are given by the hypothesis.

Now, by Proposition \ref{p-Birksub}, for any $\mu\in M$ there is some $y\in U_Y$ such that
\[
\{n\geq 0 : T^n y\in V_Y\} \in \mathcal{A}_{\delta,\mu}
\]
(in fact, $y$ may even be chosen independently of $\mu$, but that will not be used). Note that the point $y$ lies in $Y$; it remains to show that it can be replaced by a point $x$ in $X$ that satisfies (B). 

Thus, let $\mu\in M$. By property (i) of upper Furstenberg families there is a finite set $G\subset\mathbb{N}_0$ such that any superset of 
\[
F:=\{n\geq 0 : T^n y\in V_Y\}\cap G
\]
belongs to $\mathcal{A}_{\delta,\mu}$. Thus condition \eqref{eq-transfer1} holds. By hypothesis there is some $x\in X$ such that \eqref{eq-transfer2} holds. We then have that $x\in B(x_1,\varepsilon_1)=U$, and the set 
\[
\{n\geq 0 : T^n x\in V\} = \{n\geq 0 : \|x_2-T^nx\|<\varepsilon_2\}
\]
contains $F$ and thus belongs to $\mathcal{A}_{\delta,\mu}$.

Altogether we have shown that condition (B) holds, which implies (b).
\end{proof}

One may understand condition (b) as saying that the operator $T: Y\to Y$ admits an $\mathcal{A}$-hypercyclic vector for $X$ that comes from $X$. Thus our result is similar in spirit to Theorem 2.1 of Herzog \cite{Hzg94}.

\section{Bilateral shifts on $X$ for upper Furstenberg families}\label{s-bilwsupp}

The previous general theorem easily implies an improvement of Theorem \ref{t-bwahcgen} for upper Furstenberg families (under the assumption of unconditionality of the basis).

\begin{theorem}\label{t-bwahcupper}
Let $X$ be a Banach sequence space over $\IZ$ in which $(e_n)_{n\in \IZ}$ is an unconditional basis. Suppose that the weighted backward shift $B_w$ is an operator on $X$, and define $v$ by \eqref{vns}. Let $\mathcal{A}$ be an upper Furstenberg family. Then the following assertions are equivalent.

\begin{itemize}
\item[\emph{(a)}] For some (equivalently, for all) sequences $(\varepsilon_p)_{p\geq 1}$ of positive numbers tending to $0$ there is a sequence $(A_p)_{p\geq 1}$ of pairwise disjoint sets in $\mathcal{A}$ such that, for any $p,q\geq 1$, any $m\in A_q$, and $j=-p,\ldots,p$,
\begin{equation}\label{eq6}
\Vertiii{\sum_{\substack{n\in A_p\\n\neq m}} v_{n-m+j}e_{n-m+j}} <\min(\varepsilon_p,\varepsilon_q).
\end{equation}
\item[\emph{(b)}] $B_w:\widehat{X}\to \widehat{X}$ is $\mathcal{A}$-hypercyclic for $X$. 
\item[\emph{(c)}] $B_w:X\to X$ is $\mathcal{A}$-hypercyclic. 
\end{itemize}

\noindent If $B_w$ is invertible, then it suffices to require \eqref{eq6} for $j=0$.
\end{theorem}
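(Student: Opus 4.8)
The plan is to assemble the three-way equivalence from the two characterisation results already proved, so that the new content is concentrated in a single transfer step. The equivalence of (a) and (b) is immediate and requires no new work: condition \eqref{eq6} is verbatim the hypothesis \eqref{eq10} of Theorem \ref{t-bwahcgen}, and since $(e_n)_{n\in\IZ}$ is now assumed \emph{unconditional}, that theorem supplies both implications; moreover its final clause gives the reduction to $j=0$ in the invertible case, which is therefore inherited here at the level of (a). It thus remains to establish (b)$\Leftrightarrow$(c), and for this I would invoke Theorem \ref{t-transfer} with $Y=\widehat{X}$, the closed invariant subspace $X$ (both facts recorded in Section \ref{s-nn}), the operator $T=B_w$, and the given upper Furstenberg family $\mathcal{A}$; note that $X$ is separable since it carries a basis, as needed for the Birkhoff condition (B) underlying that theorem. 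As (c)$\Rightarrow$(b) is the trivial direction there, the entire burden is to verify the nested approximation hypothesis of Theorem \ref{t-transfer}, and this is the main obstacle.

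To verify that hypothesis I would exploit that $X$ is precisely the subspace of $\widehat{X}$ on which the basis expansion converges, whereas each approximation constraint only concerns finitely many coordinates. Let $C\geq 1$ be the absolute constant of property (M), i.e.\ the uniform bound for the $0$--$1$ coordinate multipliers afforded by unconditionality. Given $x_2\in X$ and $\varepsilon_2>0$, I would put $x_2'=\sum_{|k|\le N_2}(x_2)_k e_k$ with $\vertiii{x_2-x_2'}<\varepsilon_2/2$ and take $\eta_2\le\varepsilon_2/(2C)$; given $x_1\in X$ and $\varepsilon_1>0$, similarly $x_1'=\sum_{|k|\le N_1}(x_1)_k e_k$ with $\vertiii{x_1-x_1'}<\varepsilon_1/(4C)$ and $\eta_1\le\varepsilon_1/(4C)$. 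These choices depend only on the data allowed by the quantifier structure of Theorem \ref{t-transfer}, and crucially they are made small relative to $C$ before $y$ and $F$ are revealed.

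The construction of $x$ is then by sheer \emph{truncation}. Given $y\in\widehat{X}$ and a finite $F\subset\IN_0$ satisfying \eqref{eq-transfer1}, I would set $x=\sum_{|k|\le L}y_k e_k$, which is finitely supported and hence lies in $X$, with $L\ge\max(N_1,\,N_2+\max F)$ chosen large enough that also $\vertiii{x_1-\sum_{|k|\le L}(x_1)_k e_k}<\varepsilon_1/2$ (possible since $x_1\in X$). The estimate at time $0$ follows from $\vertiii{x_1-x}\le\vertiii{x_1-\sum_{|k|\le L}(x_1)_k e_k}+C\,\vertiii{x_1-y}$ together with $\vertiii{x_1-y}\le\vertiii{x_1-x_1'}+\eta_1$, giving $\vertiii{x_1-x}<\varepsilon_1$. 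For the times $n\in F$ I would use the commutation identity $B_w^n x=Q_n(B_w^n y)$, where $Q_n$ is truncation to the window $\{k:|k+n|\le L\}$; since $L$ is large this window contains the support $\{|k|\le N_2\}$ of $x_2'$, so $x_2'=Q_n x_2'$ and hence $\vertiii{x_2'-B_w^n x}=\vertiii{Q_n(x_2'-B_w^n y)}\le C\eta_2$, whence $\vertiii{x_2-B_w^n x}<\varepsilon_2/2+C\eta_2<\varepsilon_2$. This yields \eqref{eq-transfer2}, so the hypothesis of Theorem \ref{t-transfer} holds.

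The genuinely delicate point — and the main obstacle — is the bookkeeping of constants in this verification rather than any single estimate: the multiplier constant $C$ inflates every error incurred in replacing $y$ by its truncation $x$, so the targets $x_1',x_2'$ and the tolerances $\eta_1,\eta_2$ must be fixed small against $C$ in advance, while $L$ (which may legitimately depend on $y$ and $F$) is the only quantity free to absorb the basis-convergence tails $\vertiii{x_1-\sum_{|k|\le L}(x_1)_k e_k}$. Once this order of quantifiers is respected, Theorem \ref{t-transfer} delivers (b)$\Leftrightarrow$(c), and combined with (a)$\Leftrightarrow$(b) this closes the chain (a)$\Leftrightarrow$(b)$\Leftrightarrow$(c); the invertible-case reduction to $j=0$ needs no further argument, as it is already available for (a).
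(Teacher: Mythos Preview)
Your proposal is correct and follows essentially the same route as the paper: (a)$\Leftrightarrow$(b) via Theorem~\ref{t-bwahcgen}, then (b)$\Leftrightarrow$(c) via Theorem~\ref{t-transfer}, with the hypothesis of the latter verified by truncating $y$ to a finitely supported sequence and invoking property~(M). The paper's verification is marginally more economical in that it first replaces $x_1,x_2$ by finitely supported approximants and then takes $x_1'=x_1$, $x_2'=x_2$ (a ``stronger version'' of the hypothesis), thereby avoiding the separate tracking of $x_i$ versus $x_i'$ and the extra tail term $\vertiii{x_1-\sum_{|k|\le L}(x_1)_k e_k}$; but the underlying idea and the use of~(M) are identical.
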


\begin{remark}\label{r-equivcondrem} 
Remark \ref{r-equivcond} retains its validity. 
\end{remark}

\begin{proof} In view of Theorems \ref{t-bwahcgen} and \ref{t-transfer} we need only show that $B_w$ satisfies the hypothesis of Theorem \ref{t-transfer} with $Y=\widehat{X}$. Thus, let $x^{(1)}, x^{(2)}\in X$ and $\varepsilon_1,\varepsilon_2>0$ be given (note that we prove a stronger version of the hypothesis). By density we may assume that $x^{(1)}$ and $x^{(2)}$ are finitely non-zero sequences.

We set $\eta_1= \varepsilon_1/C$, $\eta_2= \varepsilon_2/C$, where $C$ is a constant for which condition (M) above is satisfied. Let $\widehat{x}\in \widehat{X}$ and $F\subset \mathbb{N}_0$ be finite such that
\[
\vertiii{x^{(1)}-\widehat{x}}<\eta_1=\varepsilon_1/C \text{ and }  \vertiii{x^{(2)}-B_w^n\widehat{x}}<\eta_2=\varepsilon_2/C \text{ for all $n\in F$}.
\] 
Suppose that
\[
\widehat{x}= \sum_{n\in\IZ} x_ne_n.
\]
Then, for $l\geq 0$, we define
\[
x = \sum_{|n|\leq l} x_ne_n,
\]
which belongs to $X$ as a finitely non-zero sequence. Since $x^{(1)}$ is finitely non-zero, if $l$ is sufficiently large, then $x^{(1)}-x$ can be obtained from $x^{(1)}-\widehat{x}$ by deleting entries. Similarly, for any $n\in \mathbb{N}_0$, if $l$ is sufficiently large, then $x^{(2)}-B_w^n x$ can be obtained from $x^{(2)}-B_w^n \widehat{x}$ by deleting entries. It then follows from (M) that for all sufficiently large $l$, 
\[
\vertiii{x^{(1)}-x}<\varepsilon_1 \text{ and }  \vertiii{x^{(2)}-B_w^nx}<\varepsilon_2 \text{ for all $n\in F$};
\] 
note that we have used here that $F$ is a finite set. Thus $B_w$ satisfies the hypothesis of Theorem \ref{t-transfer}.
\end{proof}

Keeping Remark \ref{r-equivcondrem} in mind we spell out the special case of the theorem for $X=c_0(\IZ)$. 

\begin{corollary}\label{c-bwahc0}
Let $B_w$ be a weighted backward shift on $c_0(\IZ)$, and let $\mathcal{A}$ be an upper Furstenberg family. Then the following assertions are equivalent.

\begin{itemize}
\item[\emph{(a)}] For some (equivalently, for all) sequences $(\varepsilon_p)_{p\geq 1}$ of positive numbers tending to $0$ there is a sequence $(A_p)_{p\geq 1}$ of pairwise disjoint sets in $\mathcal{A}$ such that
\begin{itemize}
\item[\rm (i)] for any $p,q\geq 1$
\[
\min_{\substack{n\in A_p, m\in A_q\\n\neq m}}|n-m|>p+q;
\]
\item[\rm (ii)] for any $p,q\geq 1$, any $m\in A_q$, $n\in A_p$, any $j=-p,\ldots,p$, if $n<m$ then
\[
\prod_{\nu=n-m+j+1}^{0}|w_\nu| <\min(\varepsilon_p,\varepsilon_q)
\]
and if $n>m$ then
\[
\prod_{\nu=1}^{n-m+j}|w_\nu|>\frac{1}{\min(\varepsilon_p,\varepsilon_q)}.
\]
\end{itemize}
\item[\emph{(b)}] $B_w:\ell^\infty(\IZ)\to \ell^\infty(\IZ)$ is $\mathcal{A}$-hypercyclic for $c_0(\IZ)$. 
\item[\emph{(c)}] $B_w:c_0(\IZ)\to c_0(\IZ)$ is $\mathcal{A}$-hypercyclic. 
\end{itemize}

\noindent If $B_w$ is invertible, then it suffices to require $j=0$ in condition \emph{(ii)}.
\end{corollary}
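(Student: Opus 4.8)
The plan is to deduce the corollary directly from Theorem~\ref{t-bwahcupper}, simply reading off the concrete form of its abstract conditions in the special case $X=c_0(\IZ)$. First I would record the relevant identifications: $c_0(\IZ)$ is a Banach sequence space over $\IZ$ in which $(e_n)_{n\in\IZ}$ is a (monotone, hence unconditional) basis, $B_w$ is automatically an operator on it, and $\widehat{c_0(\IZ)}=\ell^\infty(\IZ)$ as recorded in Section~\ref{s-nn}. With these identifications, assertions (b) and (c) of the corollary are \emph{verbatim} assertions (b) and (c) of Theorem~\ref{t-bwahcupper}, and the $j=0$ reduction for invertible $B_w$ is inherited in the same way. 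Hence the entire task reduces to showing that assertion (a) of the corollary is equivalent to condition \eqref{eq6}.

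For that equivalence I would invoke Remark~\ref{r-equivcondrem} (and thereby Remark~\ref{r-equivcond}), which permits replacing \eqref{eq6} by the spreading condition (i) together with the two split norm inequalities of (ii). Condition (i) of the corollary is literally (i) of that remark, so only the two norm inequalities remain to be made explicit. The crucial observation is that on $c_0(\IZ)$, and therefore on $\ell^\infty(\IZ)$, the norm $\vertiii{\cdot}$ coincides with the sup norm $\|\cdot\|_\infty$.

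Next I would exploit the spreading condition to note that, for fixed $p,q,m,j$, the indices $n-m+j$ occurring in each split sum are pairwise distinct as $n$ ranges over the relevant part of $A_p$; each sum is thus a combination of distinct basis vectors, and its sup norm is exactly the maximum of the moduli of its coefficients. This collapses each norm inequality into a family of scalar inequalities, one per admissible $n$. Finally I would compute the weights $v_{n-m+j}$ from \eqref{vns}: under (i) one has $n-m+j<0$ when $n<m$ and $n-m+j>0$ when $n>m$, so $v_{n-m+j}=\prod_{\nu=n-m+j+1}^{0}w_\nu$ in the first case and $v_{n-m+j}=1/\prod_{\nu=1}^{n-m+j}w_\nu$ in the second. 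Taking absolute values turns the first inequality into $\prod_{\nu=n-m+j+1}^{0}|w_\nu|<\min(\varepsilon_p,\varepsilon_q)$ and, after passing to reciprocals, the second into $\prod_{\nu=1}^{n-m+j}|w_\nu|>1/\min(\varepsilon_p,\varepsilon_q)$, which are precisely the inequalities in (a)(ii).

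I expect no genuine obstacle here: the argument is essentially a transcription of the abstract criterion into coordinates, and the only points demanding care are purely bookkeeping — confirming via the spreading condition that the basis vectors in each sum are distinct, so that the $\vertiii{\cdot}$-norm is a maximum of coefficient moduli, and correctly reversing the inequality when passing to reciprocals in the $n>m$ case.
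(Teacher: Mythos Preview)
Your approach is essentially the paper's own: the corollary is stated there simply as the specialization of Theorem~\ref{t-bwahcupper} to $X=c_0(\IZ)$, read through Remark~\ref{r-equivcondrem} (hence Remark~\ref{r-equivcond}), and your write-up spells out exactly that translation. One small correction: you justify unconditionality of $(e_n)_{n\in\IZ}$ in $c_0(\IZ)$ by saying the basis is ``monotone, hence unconditional''---monotone does not imply unconditional in general, so drop that inference and simply observe that the canonical basis of $c_0(\IZ)$ is $1$-unconditional (deleting coordinates never increases the sup norm).
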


This result should be compared with the case of upper frequent hypercyclicity in \cite[Theorem 12]{BaRu15}. First of all, the result of Bayart and Ruzsa is now extended to not necessarily invertible operators and to arbitrary upper Furstenberg families. But the main improvement is to eliminate the (non-symmetric) condition (c) in their result. In fact, removing this condition has been the main goal of our work here: this leads to the following result, which contains Theorem \ref{t-main} as a special case.

\begin{theorem}\label{t-main2}
Let $B_w$ be an invertible weighted backward shift on $c_0(\IZ)$ and $\mathcal{A}$ be an upper Furstenberg family. If $B_w$ is $\mathcal{A}$-hypercyclic then so is $B_w^{-1}$.
\end{theorem}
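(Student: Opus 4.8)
The plan is to reduce everything to the symmetric characterisation in Corollary \ref{c-bwahc0}, exploiting the fact that conjugating $B_w^{-1}$ by coordinate reversal turns it back into an invertible weighted backward shift whose defining condition is merely a relabelling of the one for $B_w$. First I would conjugate. Let $J:c_0(\IZ)\to c_0(\IZ)$ be the coordinate flip $(Jx)_n=x_{-n}$, an isometric involution permuting the canonical basis. Since $J$ is a homeomorphism, since $J(U)$ is open and non-empty whenever $U$ is, and since $(JSJ)^n=JS^nJ$ for every operator $S$, the correspondence $x\mapsto Jx$ carries $\mathcal{A}$-hypercyclic vectors of $S$ to those of $JSJ$; hence $S$ is $\mathcal{A}$-hypercyclic if and only if $JSJ$ is. A direct computation gives $(B_w^{-1}y)_k=y_{k-1}/w_k$, so $B_w^{-1}$ is a weighted \emph{forward} shift, while $\widetilde{B}:=JB_w^{-1}J$ satisfies $(\widetilde{B}x)_n=x_{n+1}/w_{-n}$, that is, $\widetilde{B}$ is an invertible weighted backward shift on $c_0(\IZ)$ with weights $\widetilde{w}_k=1/w_{1-k}$. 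Thus $B_w^{-1}$ is $\mathcal{A}$-hypercyclic if and only if $\widetilde{B}$ is, and both $B_w$ and $\widetilde{B}$ fall under the invertible ($j=0$) case of Corollary \ref{c-bwahc0}.

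Next I would show that a single witnessing family serves both operators. Assuming $B_w$ is $\mathcal{A}$-hypercyclic, fix $(\varepsilon_p)_{p\geq 1}$ and pairwise disjoint $(A_p)_{p\geq 1}\subset\mathcal{A}$ realising condition (a) for $B_w$; I claim the very same $(\varepsilon_p)$ and $(A_p)$ realise condition (a) for $\widetilde{B}$. Condition (i) is literally unchanged, since the sets are identical. For condition (ii) I would substitute $\widetilde{w}_\nu=1/w_{1-\nu}$ into the two product inequalities that Corollary \ref{c-bwahc0} demands for $\widetilde{B}$ and reindex. For $n\in A_p$, $m\in A_q$ with $n<m$ the relevant $\widetilde{B}$-product becomes, after the reindexing $\nu\mapsto 1-\nu$, equal to $1/\prod_{k=1}^{m-n}|w_k|$; and for $n>m$ it becomes $1/\prod_{k=m-n+1}^{0}|w_k|$.

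Consequently the ``$<\min(\varepsilon_p,\varepsilon_q)$'' requirement for $\widetilde{B}$ in the case $n<m$ reads $\prod_{k=1}^{m-n}|w_k|>1/\min(\varepsilon_p,\varepsilon_q)$, which is precisely the second (largeness) inequality of condition (ii) for $B_w$ applied to the pair $m\in A_q$, $n\in A_p$, for which $m>n$; and the ``$>1/\min(\varepsilon_p,\varepsilon_q)$'' requirement for $\widetilde{B}$ in the case $n>m$ reads $\prod_{k=m-n+1}^{0}|w_k|<\min(\varepsilon_p,\varepsilon_q)$, which is the first (smallness) inequality of condition (ii) for $B_w$ applied to the same pair $m\in A_q$, $n\in A_p$, now with $m<n$. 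Here the symmetry of $\min(\varepsilon_p,\varepsilon_q)$ in $p,q$ is exactly what licenses interchanging the two running indices. Hence condition (a) holds for $\widetilde{B}$ with the same data, so $\widetilde{B}$, and therefore $B_w^{-1}$, is $\mathcal{A}$-hypercyclic.

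The only genuine content is the bookkeeping of the second step: one must set up the two product inequalities of Corollary \ref{c-bwahc0} for $\widetilde{B}$, perform the substitution $\widetilde{w}_\nu=1/w_{1-\nu}$ together with the reindexing of the products without sign or off-by-one errors, and recognise that swapping the two index variables interchanges the cases ``$n<m$'' and ``$n>m$'' as well as the smallness and largeness inequalities. This is precisely the symmetry that the additional, non-symmetric condition in the earlier characterisation of Bayart and Ruzsa obstructed; once Corollary \ref{c-bwahc0} is available no further estimates and no Baire-category argument are needed. I expect the index gymnastics in the products to be the one place demanding care, but I anticipate no real obstacle.
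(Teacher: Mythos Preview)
Your proposal is correct and follows essentially the same route as the paper: conjugate $B_w^{-1}$ by the coordinate flip to obtain the invertible backward shift with weights $w'_n=1/w_{1-n}$, and then observe that the $j=0$ version of condition (a) in Corollary~\ref{c-bwahc0} is invariant under the passage $w\mapsto w'$ (with the same sets $A_p$ and the same $\varepsilon_p$). The paper states this invariance in one line, whereas you spell out the reindexing and the swap of the roles of $(n,p)$ and $(m,q)$; the arguments are otherwise identical.
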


\begin{proof}
The inverse of $B_w$ is the weighted forward shift $F_{1/w}: y\to (\frac{1}{w_{n}}y_{n-1})_{n\in\IZ}$. Via the bijection $\phi(x)=(x_{-n})$ on $c_0(\IZ)$, this forward shift is conjugate to the weighted backward shift $B_{w'}$ with
\[
w'_n=\frac{1}{w_{-n+1}},\quad n\in\IZ,
\]
that is, we have that $\phi\circ B_{w'}=F_{1/w}\circ \phi$. Since the characterising condition in Corollary \ref{c-bwahc0} is invariant under the passage from $w$ to $w'$ (it suffices to consider $j=0$) the result follows.
\end{proof}

\section{Bilateral shifts on $X$ for arbitrary Furstenberg families}\label{s-bilwsXarb}

As a by-product of Theorem \ref{t-bwahcgen} we may also obtain a characterisation of the $\mathcal{A}$-hypercyclicity of weighted backward shifts on $X$ itself for arbitrary Furstenberg families, thereby providing an analogue of the result in \cite{BoGr17} for unilateral shifts. The price we have to pay is the introduction of an additional condition.

\begin{theorem}\label{t-bwahcgenX}
Let $X$ be a Banach sequence space over $\IZ$ in which $(e_n)_{n\in \IZ}$ is a basis. Suppose that the weighted backward shift $B_w$ is an operator on $X$, and define $v$ by \eqref{vns}. Let $\mathcal{A}$ be a Furstenberg family. 

If for some (equivalently, for all) sequences $(\varepsilon_p)_{p\geq 1}$ of positive numbers tending to $0$ there is a sequence $(A_p)_{p\geq 1}$ of pairwise disjoint sets in $\mathcal{A}$ such that
\begin{itemize}
\item[\emph{(i)}] for any $p\geq 1$
\[
\sum_{n\in A_p} v_{n+p}e_{n+p} \quad \text{converges in $X$};
\]
\item[\emph{(ii)}] for any $p,q\geq 1$, any $m\in A_q$, and $j=-p,\ldots,p$,
\begin{equation}\label{eq11}
\Big\|\sum_{\substack{n\in A_p\\n\neq m}} v_{n-m+j} e_{n-m+j}\Big\| <\min(\varepsilon_p,\varepsilon_q),
\end{equation}
\end{itemize}
then $B$ is $\mathcal{A}$-hypercyclic. If $(e_n)_{n\in \IZ}$ is an unconditional basis in $X$ then the converse also holds.

If $B_w$ is invertible, then it suffices to require \eqref{eq11} for $j=0$; and in \emph{(i)} one may demand that $\sum_{n\in A_p} v_{n}e_{n}$ converges in $X$ for any $p\geq 1$.
\end{theorem}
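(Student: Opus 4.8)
\section*{Proof proposal for Theorem~\ref{t-bwahcgenX}}

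The plan is to follow the pattern by which Theorem~\ref{t-bwahcgen} was deduced from Theorem~\ref{t-bahcgen} and to isolate the precise role of the new condition~(i). Via the conjugacy $\phi_v\colon X(v)\to X$ recalled before Theorem~\ref{t-bwahcgen}, the weighted shift $B_w$ on $X$ is conjugate to the unweighted shift $B$ on $Z:=X(v)$, in which $(e_n)_{n\in\IZ}$ is again a basis (an unconditional one, in the unconditional case) and on which $B$ acts as an operator. Under $\phi_v$ condition~(ii) becomes exactly condition \eqref{eq1} for $Z$, while condition~(i) becomes the statement that $\sum_{n\in A_p}e_{n+p}$ converges in $Z$. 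Thus it suffices to show, for $B$ on $Z$, that the conjunction of \eqref{eq1} and the convergence of $\sum_{n\in A_p}e_{n+p}$ in $Z$ characterises genuine $\mathcal{A}$-hypercyclicity of $B$ on $Z$, the sufficiency holding in general and the necessity under unconditionality. Since $Z$ is a closed subspace of $\widehat{Z}$ and $\mathcal{A}$-hypercyclicity of $B$ \emph{for} $Z$ on $\widehat{Z}$ is already settled by Theorem~\ref{t-bahcgen}, only the passage between ``for $Z$'' and ``in $Z$'' is at stake, and this is governed by whether the relevant vectors lie in $Z$.

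For the forward implication I would take the very vector $x\in\widehat{Z}$ constructed in the proof of Theorem~\ref{t-bahcgen}, which is already $\mathcal{A}$-hypercyclic for $Z$ and is built as an absolutely $\vertiii{\cdot}$-convergent sum of blocks $b_p=\sum_{j=-p}^p y^{(p)}_j\sum_{n\in A_p}e_{n+j}$ with $\vertiii{b_p}\le 2^{-p}$. The new input is condition~(i): since $\sum_{n\in A_p}e_{n+p}$ converges in $Z$ and $B$ is continuous on $Z$, each $\sum_{n\in A_p}e_{n+j}=B^{p-j}\sum_{n\in A_p}e_{n+p}$ (with $p-j\ge 0$) converges in $Z$, so every block $b_p$ lies in $Z$. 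As $\vertiii{\cdot}$ is equivalent to the norm of $Z$ on $Z$, the bound $\vertiii{b_p}\le 2^{-p}$ makes $\sum_p b_p$ absolutely convergent in the Banach space $Z$; hence $x\in Z$. Finally, because every non-empty open subset of $Z$ is the trace on $Z$ of an open subset of $\widehat{Z}$ meeting $Z$, and $B^nx\in Z$ for all $n$, the orbit of $x$ visits open subsets of $Z$ along exactly the index sets that already belong to $\mathcal{A}$; thus $x$ is $\mathcal{A}$-hypercyclic in $Z$, and transporting back through $\phi_v$ yields an $\mathcal{A}$-hypercyclic vector for $B_w$ in $X$.

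For the converse I would first note that if $B$ is $\mathcal{A}$-hypercyclic in $Z$ with vector $x\in Z$, then $x$ is a fortiori $\mathcal{A}$-hypercyclic for $Z$ on $\widehat{Z}$. The converse part of Theorem~\ref{t-bahcgen}, applied to this $x$, produces sets $A_p\in\mathcal{A}$ satisfying \eqref{eq1}, i.e.\ condition~(ii), together with the coordinate estimate \eqref{eq-dp2}, which for $j=p$ gives $|1/x_{n+p}|<\varepsilon_p$ for all $n\in A_p$. It remains to see that these very sets also satisfy~(i), and here I expect the main obstacle: one must upgrade the mere boundedness of $(1/x_{n+p})_{n\in A_p}$ to an honest convergence statement in $Z$. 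Since $x\in Z$ and $(e_n)$ is an unconditional basis of $Z$, the expansion $\sum_n x_ne_n$ converges unconditionally in $Z$, so its subseries $\sum_{n\in A_p}x_{n+p}e_{n+p}$ converges in $Z$; multiplying by the bounded sequence $(1/x_{n+p})_{n\in A_p}$---legitimate in $Z$, not merely in $\widehat{Z}$, precisely because the basis is unconditional---shows that $\sum_{n\in A_p}e_{n+p}$ converges in $Z$, which is condition~(i). This multiplier step within $Z$, fed by the lower bound on the coordinates $x_{n+p}$ supplied by the $A_p$-construction, is the crux of the theorem.

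It remains to address the invertible case. When $B$ is invertible on $Z$, the reduction of the index range in \eqref{eq1} (equivalently \eqref{eq11}) to the single value $j=0$ is carried out exactly as in the opening paragraphs of the proof of Theorem~\ref{t-bahcgen}, by rescaling $(\varepsilon_p)$ with powers of $M=\max(\vertiii{B},\vertiii{B^{-1}})$. Moreover $B^p$ and $B^{-p}$ are then bounded and interchange $\sum_{n\in A_p}e_{n+p}$ with $\sum_{n\in A_p}e_n$, so the convergence demanded in~(i) may equally be imposed on $\sum_{n\in A_p}e_n$; transported through $\phi_v$ this is the convergence of $\sum_{n\in A_p}v_ne_n$ in $X$. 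Throughout, one checks that the ``for some $\Leftrightarrow$ for all'' passage and the subsequence selections inherited from Theorem~\ref{t-bahcgen} preserve condition~(i), since it is a per-$p$ statement stable under applying nonnegative powers of $B$ and under relabelling.
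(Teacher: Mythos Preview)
Your proposal is correct and follows essentially the same route as the paper: reduce to the unweighted shift via $\phi_v$, observe that condition~(i) is precisely what forces the blocks $b_p$ (and hence the constructed vector $x$) to lie in $Z$ rather than merely in $\widehat{Z}$, and for the converse combine the coordinate lower bound \eqref{eq-dp2} with unconditionality to pass from the convergent expansion of $x$ to the convergence of $\sum_{n\in A_p}e_{n+p}$. Your discussion is in fact somewhat more explicit than the paper's---notably in checking that the subsequence selections and the ``for some $\Leftrightarrow$ for all'' step preserve condition~(i), and in spelling out the multiplier argument within $Z$---but the underlying ideas coincide.
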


\begin{remark}\label{r-equivcondrem2} 
Once more we recall Remark \ref{r-equivcond} for an equivalent statement of condition (ii). 
\end{remark}

\begin{proof} It suffices to consider the unweighted case when $w_n=v_n=1$ for all $n\in \IZ$. 

For sufficiency we proceed exactly as in the proof of Theorem \ref{t-bahcgen}; the sole difference is that condition (i) ensures that 
$\sum_{n\in A_p} e_{n+p} \in X$, so that the whole proof can now be performed in $X$, leading to an $\mathcal{A}$-hypercyclic vector $x\in X$.
Incidentally, the fact that the series in (ii) converge follows from (i) and an application of a suitable power of $B_w$.

Likewise for necessity we can repeat the proof of Theorem \ref{t-bahcgen} verbatim. Since, now, $x\in X$ we have that, for any $p\geq 1$,
\[
\sum_{n\in\IZ} x_{n+p}e_{n+p}
\]
converges in $X$. Since, by \eqref{eq-dp2},
\[
\Big|\frac{1}{x_{n+p}}\Big|\leq \varepsilon_p
\]
for any $n\in A_p$, unconditionality of the basis then implies (i) in addition to (ii). 
\end{proof} 

In view of Question \ref{q1} it may be of interest to state explicitly the result for $X=c_0(\IZ)$, where we take account of Remark \ref{r-equivcondrem2}.

\begin{corollary}\label{c-bwahcgenX}
Let $B_w$ be a weighted backward shift on $c_0(\IZ)$, and let $\mathcal{A}$ be a  Furstenberg family. Then the following assertions are equivalent.
\begin{itemize}
\item[\emph{(a)}] For some (equivalently, for all) sequences $(\varepsilon_p)_{p\geq 1}$ of positive numbers tending to $0$ there is a sequence $(A_p)_{p\geq 1}$ of pairwise disjoint sets in $\mathcal{A}$ such that
\begin{itemize}
\item[\rm (i)] for any $p,q\geq 1$
\[
\min_{\substack{n\in A_p, m\in A_q\\n\neq m}}|n-m|>p+q;
\]
\item[\rm (ii)] for any $p\geq 1$,
\[
w_1\cdots w_{n+p}\to \infty\quad\text{as $n\to\infty$, $n\in A_p$};
\]
\item[\rm (iii)] for any $p,q\geq 1$, any $m\in A_q$, $n\in A_p$, any $j=-p,\ldots,p$, if $n<m$ then
\[
\prod_{\nu=n-m+j+1}^{0}|w_\nu| <\min(\varepsilon_p,\varepsilon_q)
\]
and if $n>m$ then
\[
\prod_{\nu=1}^{n-m+j}|w_\nu|>\frac{1}{\min(\varepsilon_p,\varepsilon_q)}.
\]
\end{itemize}
\item[\rm (b)] $B_w$ is $\mathcal{A}$-hypercyclic on $c_0(\IZ)$.
\end{itemize}
If $B_w$ is invertible, then it suffices to require $j=0$ in condition \emph{(iii)}; and in \emph{(ii)} one may demand that, for any $p\geq 1$, $w_1\cdots w_{n}\to \infty$ as $n\to\infty$, $n\in A_p$.
\end{corollary}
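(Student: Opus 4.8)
The plan is to specialise Theorem \ref{t-bwahcgenX} to the space $X=c_0(\IZ)$ and to read off what each of its conditions says in the supremum norm. Since $(e_n)_{n\in\IZ}$ is an unconditional basis of $c_0(\IZ)$ and $B_w$ is, by hypothesis, an operator on $c_0(\IZ)$, the full equivalence in Theorem \ref{t-bwahcgenX} (including the converse) applies; thus it remains only to match its conditions (i) and (ii) with conditions (i)--(iii) of the corollary. First I would invoke Remark \ref{r-equivcond} (recalled in Remark \ref{r-equivcondrem2}): after passing, if necessary, to the sets furnished by Remark \ref{r-spread}, one may assume the spread condition, which is exactly condition (i) of the corollary, and under it the single norm in \eqref{eq11} splits equivalently into the two norms of Remark \ref{r-equivcond}.

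Next I would translate condition (ii) of Theorem \ref{t-bwahcgenX}. Under the spread condition, for $m\in A_q$, $n\in A_p$ and $j=-p,\dots,p$ the index $n-m+j$ is negative when $n<m$ and positive when $n>m$; hence the two families of basis vectors occurring in the split norms never share an index, and within each family the map $n\mapsto n-m+j$ is injective. In $c_0(\IZ)$ the norm is the supremum of the moduli of the coefficients, so each split norm being $<\min(\varepsilon_p,\varepsilon_q)$ is equivalent to the corresponding coefficient being $<\min(\varepsilon_p,\varepsilon_q)$ in modulus for every admissible $n$. Inserting the values $v_{n-m+j}=\prod_{\nu=n-m+j+1}^{0}w_\nu$ for $n<m$ and $v_{n-m+j}=1/\prod_{\nu=1}^{n-m+j}w_\nu$ for $n>m$ from \eqref{vns} yields precisely the two inequalities of condition (iii).

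Then I would translate condition (i) of Theorem \ref{t-bwahcgenX}. For $n\in A_p$ the index $n+p$ is positive and distinct for distinct $n$, so $\sum_{n\in A_p}v_{n+p}e_{n+p}$ converges in $c_0(\IZ)$ if and only if its coefficients form a null sequence, i.e. $v_{n+p}=1/(w_1\cdots w_{n+p})\to 0$ as $n\to\infty$ in $A_p$, which is exactly $w_1\cdots w_{n+p}\to\infty$ — condition (ii) of the corollary. The invertible case follows in the same way: by Theorem \ref{t-bwahcgenX} one may take $j=0$ throughout and replace $v_{n+p}e_{n+p}$ by $v_ne_n$, so that the convergence condition becomes $w_1\cdots w_n\to\infty$ as $n\to\infty$ in $A_p$.

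There is no genuine obstacle here beyond the bookkeeping of indices; the specialisation is essentially a dictionary between the abstract norm conditions and the concrete products of weights. The only point requiring care is that both the passage from the supremum norm of a $c_0$-sum to the pointwise product inequalities in (iii) and the characterisation of convergence in (ii) rely on the spread condition (i) to guarantee that the relevant unit vectors are distinct and that the positive- and negative-index contributions do not interfere; once this is in place, assembling (a) $\Leftrightarrow$ (b) from Theorem \ref{t-bwahcgenX} is immediate.
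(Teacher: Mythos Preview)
Your proposal is correct and follows exactly the route the paper intends: the corollary is stated without proof as the specialisation of Theorem \ref{t-bwahcgenX} to $X=c_0(\IZ)$, ``taking account of Remark \ref{r-equivcondrem2}'' (i.e.\ Remark \ref{r-equivcond}), and you have spelled out precisely that dictionary. The only additions you make are the routine verifications (injectivity of $n\mapsto n-m+j$, positivity of $n+p$, convergence in $c_0$ as a null-coefficient condition), all of which are standard and correctly handled.
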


This extends the case of frequent hypercyclicity in \cite[Theorem 12]{BaRu15} to not necessarily invertible operators and to arbitrary Furstenberg families. 

But the question posed by Bayart and Ruzsa \cite[p.~707]{BaRu15} remains: can one drop the (non-symmetric) condition (ii) for frequent hypercyclicity (as we did in the previous section for upper Furstenberg families)? If so then the bilateral weighted shifts on $c_0(\IZ)$ could not serve as counter-examples to Question \ref{q1}.

\section{Unilateral shifts}\label{s-uni}
We may apply the same techniques as in Sections \ref{s-bilwsarb} and \ref{s-bilwsupp} to unilateral weighted backward shifts on sequence spaces over $\IN_0$, which leads to the following.

\begin{theorem}\label{t-bwahcupperuni}
Let $X$ be a Banach sequence space over $\IN_0$ in which $(e_n)_{n\geq 0}$ is a basis. Suppose that the unilateral weighted backward shift $B_w$ is an operator on $X$. Let $\mathcal{A}$ be a Furstenberg family. Consider the following assertions:

\begin{itemize}
\item[\emph{(a)}] For some (equivalently, for all) sequences $(\varepsilon_p)_{p\geq 1}$ of positive numbers tending to $0$ there is a sequence $(A_p)_{p\geq 1}$ of pairwise disjoint sets in $\mathcal{A}$ such that, for any $p,q\geq 1$, any $m\in A_q$, and $j=0,\ldots,p$,
\[
\Vertiii{\sum_{\substack{n\in A_p\\n> m}} \frac{1}{\prod_{\nu=1}^{n-m+j}w_\nu}e_{n-m+j}} <\min(\varepsilon_p,\varepsilon_q).
\]
\item[\emph{(b)}] $B_w:\widehat{X}\to \widehat{X}$ is $\mathcal{A}$-hypercyclic for $X$. 
\item[\emph{(c)}] $B_w:X\to X$ is $\mathcal{A}$-hypercyclic. 
\end{itemize}

Then \emph{(a)} implies \emph{(b)}. If $(e_n)_{n\geq 0}$ is an unconditional basis, then \emph{(a)} and \emph{(b)} are equivalent.
If $(e_n)_{n\geq 0}$ is an unconditional basis and $\mathcal{A}$ is an upper Furstenberg family then all three assertions are equivalent.
\end{theorem}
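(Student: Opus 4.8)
The plan is to follow the three-part structure of Sections \ref{s-bilwsarb} and \ref{s-bilwsupp}, transcribing each bilateral argument into the one-sided setting. As in the reduction preceding Theorem \ref{t-bwahcgen}, I would first pass to the unweighted case $w_n=1$: with the obvious adaptation over $\IN_0$, namely $v_n=1/(w_1\cdots w_n)$ for $n>0$ and $v_0=1$, the operator $B_w$ on $X$ is conjugate to $B$ on $X(v)$, so it suffices to treat $B$, where condition (a) becomes $\Vertiii{\sum_{n\in A_p,\,n>m} e_{n-m+j}}<\min(\varepsilon_p,\varepsilon_q)$ for $j=0,\ldots,p$. Throughout, the single structural difference from the bilateral case is that $X$ lives over $\IN_0$, so the approximating vectors are supported on $\{0,\ldots,p\}$ and the index $j$ ranges over $0,\ldots,p$.

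For (a) $\Rightarrow$ (b), I would repeat the sufficiency construction of Theorem \ref{t-bahcgen} essentially verbatim, including the preliminary step producing the spreading condition \eqref{eq2}. One takes a dense sequence $y^{(p)}=\sum_{j=0}^p y_j^{(p)} e_j$ of finitely supported vectors and forms $x=\sum_p\sum_{j=0}^p y_j^{(p)}\sum_{n\in A_p} e_{n+j}$; since $p-j\geq 0$, only nonnegative powers of $B$ enter, matching the non-invertibility of the unilateral shift. The decisive one-sided feature appears when computing $B^m x-y^{(q)}$ for $m\in A_q$: every contribution $e_{n-m+j}$ with $n-m+j<0$ is annihilated by the backward shift, so only indices with $n>m$ survive. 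This is exactly why condition (a) need control the tail $n>m$ alone, with no forward condition. The norm estimates are then identical to the bilateral proof and show $x\in\widehat{X}$ is $\mathcal{A}$-hypercyclic for $X$.

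For the converse (b) $\Rightarrow$ (a) under unconditionality, I would likewise reproduce the necessity argument of Theorem \ref{t-bahcgen}, invoking properties (F) and (M), target vectors $y^{(p)}$ with large coordinates on $\{0,\ldots,p\}$, and the separation inequality \eqref{eq-dpf}. The pairwise disjointness of the $A_p$ and the bound $|1/x_{n+j}|\leq\varepsilon_p$ emerge as before, and (M) isolates the surviving entries $x_{n+j}$ for $n>m$; because no negative-index terms occur, one recovers condition (a) directly, with no need for the auxiliary convergence hypothesis that was forced upon us in Theorem \ref{t-bwahcgenX}. Finally, for the equivalence of (c) with (a) and (b) when $\mathcal{A}$ is upper and the basis is unconditional, I would apply Theorem \ref{t-transfer} with $Y=\widehat{X}$, using that $X$ is invariant under $B_w$. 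Its hypothesis is checked exactly as in the proof of Theorem \ref{t-bwahcupper}: for finitely supported targets $x^{(1)},x^{(2)}$ set $\eta_i=\varepsilon_i/C$ with $C$ from (M), and truncate a near-solution $\widehat{x}\in\widehat{X}$ to $x=\sum_{n\leq l} x_n e_n\in X$; since the targets are finitely supported and $F$ is finite, for large $l$ the differences $x^{(1)}-x$ and $x^{(2)}-B_w^n x$ ($n\in F$) are obtained from $x^{(1)}-\widehat{x}$ and $x^{(2)}-B_w^n\widehat{x}$ by deleting coordinates, so (M) yields \eqref{eq-transfer2}.

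The main obstacle is bookkeeping rather than a genuine analytic difficulty: one must verify that restricting the indices to $j\geq 0$ and to $n>m$ is precisely compensated by the annihilation of low-index terms under the backward shift, so that the one-sided condition (a) remains both sufficient and, under unconditionality, necessary. No ingredient beyond the bilateral proofs and Theorem \ref{t-transfer} is required.
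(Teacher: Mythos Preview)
Your proposal is correct and follows exactly the paper's approach, which simply says to carry over the techniques of Sections~\ref{s-bilwsarb} and~\ref{s-bilwsupp} to the one-sided setting. The only point worth making explicit in the bookkeeping you flag is that ``only indices with $n>m$ survive'' in $B^mx-y^{(q)}$ requires the separation $|n-m|>p+q$ from the unilateral analogue of Remark~\ref{r-spread} (obtainable from (a) with $\varepsilon_p\leq\beta_p$), since $n<m$ with $n\geq m-j$ would otherwise leave $n-m+j\geq 0$; this is not the same as condition~\eqref{eq2}, which is only the convergence estimate for $x\in\widehat{X}$.
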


In other words, the Baire category theorem again allows one to drop the convergence condition in \cite[Theorem 6.2]{BoGr17} for upper Furstenberg families. On the other hand, for such families we have seen in \cite[Theorem 5.3]{BoGr17} that, also by Baire, it suffices to have a single set $A$ instead of a sequence $(A_p)_{p\geq 1}$ in \cite[Theorem 6.2]{BoGr17}. Thus one might be tempted to expect that both simplifications are possible at the same time. This, however, is not the case.

\begin{example}
There is a unilateral weighted backward shift $B_w$ on $c_0=c_0(\IN)$ such that, for any $p \geq 0$ and $M > 0$, there exists a subset $A$ of $\IN_0$ of positive upper density such that, for any $n,m \in A$, $n > m$,
\[
|w_1w_2\cdots w_{n-m+p}|>M,
\]
but that is not upper frequently hypercyclic. In other words, $B_w$ satisfies condition (ii) of \cite[Corollary 5.4]{BoGr17}, but not the convergence condition (i) there.

It suffices to consider the example of \cite[Theorem 7.1]{BoGr17}: the weighted backward shift $B_w$ on $c_0$ constructed there is not upper frequently hypercyclic. However, for any $j\geq 0$, the set
\[
A=\{ l.10^j:l\geq 1\}
\] 
has positive (upper) density, and for any $p\geq 0$ and $n,m\in A$ with $n>m$ one has by construction that, for $j\geq p$,
\[
w_1w_2\cdots w_{n-m+p} =\varpi_{n-m+p}\geq 2^{j+p},
\]
which can be made arbitrarily large.
\end{example}

It seems that one may benefit from Baire once, but not twice.

~\\[2mm]
\noindent
\parbox[t][3cm][l]{8cm}{\small
Karl-G. Grosse-Erdmann\\
Département de Mathématique, Institut Complexys\\
Universit\'e de Mons\\
20 Place du Parc\\
7000 Mons, Belgium\\
E-mail: kg.grosse-erdmann@umons.ac.be}

\end{document}